\def\rank{\mathrm{rank}\,}
\newcommand*{\R}{\mathbf{R}}
\newcommand*{\F}{\mathbf{F}}
\newcommand*{\adj}{\mathop{\mathrm{adj}}\nolimits}
\newtheorem{theorem}{Theorem}[section]
\theoremstyle{plain}
\newtheorem{lemma}[theorem]{Lemma}
\newtheorem{corollary}[theorem]{Corollary}
\begin{document}

%--------------------------------------------------------------------------------------------------

\title{\textbf{Generation and properties of nut graphs}}

\author{\bigskip\textbf{Kris Coolsaet$^a$, Patrick W.\ Fowler$^b$, Jan Goedgebeur$^a$}\\
$^a$\textit{Department of Applied Mathematics, Computer Science \& Statistics}\\
\textit{Ghent University}\\
\textit{Krijgslaan 281-S9, 9000 Ghent, Belgium}\\
\medskip
\texttt{kris.coolsaet@ugent.be}, \texttt{jan.goedgebeur@ugent.be}\\
$^b$\textit{Department of Chemistry}\\  
\textit{University of Sheffield}\\  
\textit{ Brook Hill, Sheffield S3 7HF, United Kingdom}\\
\texttt{p.w.fowler@sheffield.ac.uk}
}

\date{} % delete this line to display the current date
\maketitle

\vspace*{-10mm}

\begin{center}
%(Received \today)
\,
\end{center}

\begin{abstract}
A \emph{nut graph} is a graph on at least 2 vertices whose adjacency matrix has nullity~1 and for which non-trivial kernel vectors do not contain a zero. 
Chemical graphs are connected, with maximum degree at most three.
We present a new algorithm for the exhaustive generation of non-isomorphic nut graphs. Using this algorithm, we determined all nut graphs up to 13 vertices and all chemical nut graphs up to 22 vertices. 
Furthermore, we determined all nut graphs among the cubic polyhedra up to 34 vertices and all nut fullerenes up to 250 vertices.

Nut graphs are of interest in chemistry of conjugated systems, in models of electronic structure, radical reactivity 
and molecular conduction. 
The relevant mathematical properties of chemical nut graphs are the position of the zero eigenvalue in the 
graph spectrum, and the dispersion in magnitudes of kernel eigenvector entries ($r$: the ratio of maximum to minimum magnitude
of entries). 
Statistics are gathered on these properties for all the nut graphs generated here.
We also show that all chemical nut graphs
have $r \ge 2$ and that there is at least one chemical nut graph
with $r = 2$ for every order $n \ge 9$ (with the exception of $n = 10$).
\end{abstract}

\baselineskip=0.30in 

%--------------------------------------------------------------------------------------------------

\section{Introduction}

A \emph{nut graph} is a graph  of at least 2 vertices whose adjacency matrix has nullity 1 (i.e.\  rank $n-1$ where $n$ is the order of the graph) and for which non-trivial kernel
vectors do not contain a zero.

The topic of nut graphs, introduced by Sciriha and Gutman in~\cite{gutman1996graphs,sciriha1998nut}, is one that emerged from pure mathematics (linear algebra and graph theory), but which turns out to have natural connections with chemical theory in 
at least three distinct areas: electronic structure theory, the chemical reactivity of radicals and, 
perhaps more surprisingly, the theory of molecular conduction. The applications have generated new mathematical questions, and these in turn have implications for the scope of the chemical applications. This will be discussed in 
more detail in Section~\ref{subsect:nut_chem_props}.
In this connection, we note that a
\emph{chemical graph} is a connected graph with maximum degree at most 3. This definition is 
motivated by the use of graph theory in chemistry to describe electronic structure of unsaturated 
carbon networks (H\"uckel theory~\cite{Streitwieser1962}), where vertices represent carbon atoms with bonds to at most three carbon neighbours.

The smallest nut graphs have seven vertices; 
there are three seven-vertex nuts and they are shown in Figure~\ref{fig:smallest_nuts}. 
Nuts form a subset of {\it core} graphs~\cite{sciriha1998construction,sciriha2009maximal}: a core graph is singular and has every vertex appearing with non-zero entry in some eigenvector belonging to the nullspace.  A useful property of both nut and core graphs is that deletion of any vertex reduces the nullity by one.
It is also useful to note that
nut graphs are non-bipartite and have no leaves~\cite{sciriha1998nut}.
The smallest {\it chemical} nut graph has nine vertices and is shown in 
Figure~\ref{fig:smallest_chemnut}.

\begin{figure}[h!t]
	\centering
	\includegraphics[width=0.8\textwidth]{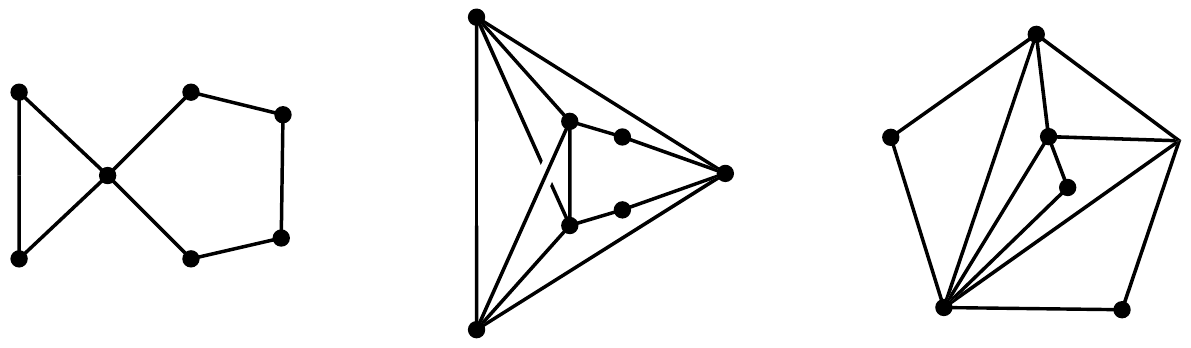}
	\caption{The smallest nut graphs.}
	\label{fig:smallest_nuts}
\end{figure}

\begin{figure}[h!t]
	\centering
	\includegraphics[width=0.35\textwidth]{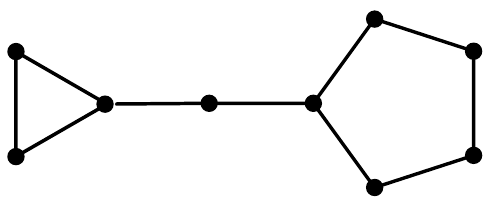}
	\caption{The smallest chemical nut graph.}
	\label{fig:smallest_chemnut}
\end{figure}

There are various published constructions for expanding a nut graph, for example by adding duplicate vertices, or expanding edges to paths of appropriate parity, from which it is clear that arbitrarily large chemical nut graphs exist~\cite{Sciriha2008}.

In~\cite{fowler2014omni} Fowler et al.\ determined all nut graphs up to 10 vertices,
and all chemical nut graphs up to 16 vertices, respectively. Furthermore in~\cite{sciriha2007nonbonding} Sciriha and Fowler also determined all nut graphs among the cubic polyhedra up to 24 vertices.
They also determined all nut fullerenes up to 120 vertices and showed that there are no nut IPR fullerenes up to at least 150 vertices~\cite{sciriha2007nonbonding}. (A \textit{fullerene}~\cite{kroto_85} is a cubic polyhedron where all faces have size 5 or 6.)

In this article we present a specialised generation algorithm for nut graphs and using this algorithm we are able to expand significantly these lists of nut graphs. 

The paper is organised as follows. In Section~\ref{section:generation_nuts} we present our generation algorithm for nut graphs. In Section~\ref{subsect:nut_counts} we present the complete lists of nut graphs which were generated by our implementation of this algorithm. Finally, in Section~\ref{subsect:nut_chem_props} we describe the results of our computations of chemically relevant properties on the lists of nut graphs.

\section{Generation of nut graphs}
\label{section:generation_nuts}

Several techniques can be used to determine whether a graph is a nut graph. A straightforward approach is to use the eigenvalues and eigenvectors of the corresponding adjacency matrix. The graph is a nut graph if and only if it has at least two vertices and 
there is exactly one eigenvalue equal to zero and the corresponding eigenvector has no zero entries. 

Although fast numerical algorithms for the determination of eigenvalues and eigenvectors of symmetric matrices exist, they are not ideal for our purposes because the floating point approximations used in computer implementations of these methods suffer from an inherent inaccuracy problem: it is never certain whether a result that `looks like' zero (perhaps because it coincides with zero up to 12 decimal places) corresponds to an actual zero, and conversely whether a result that seems different from zero might not be a real zero suffering from rounding errors.

For many problems of a numerical nature this is not an issue, because the value that is computed is a continuous function of the input values. 
Unfortunately, the rank of a matrix (and hence the property of being a nut graph) does not belong to this category.  
In our case it would help if we could determine in advance to what accuracy eigenvalues need to be computed, that is to say, 
if it were possible to calculate a lower bound on the minimal non-zero eigenvalue of a graph in advance. We 
are not aware of any theoretical results in this direction.

That inaccurate computations could indeed lead to false conclusions is illustrated by a literature example that is quite similar to the problem at hand: in a search for graphs with eigenvalue
$\sqrt{8}$
to test a conjecture made by Dias~\cite{dias1995structural},
examples of false-positive graphs were found~\cite{fowler2010counterexamples} at $n = 18$, i.e.\  graphs with a single eigenvalue not equal to $\sqrt{8}$  but coinciding with $\sqrt{8}$ to 10 places of decimals. The graph $G_{18}$ in Figure~\ref{fig:false_pos} has largest eigenvalue
$\lambda_1 = \sqrt8-\epsilon$, with $\epsilon \approx 5 \times 10\sp{-11}$.
This and other false positives in the search on the Dias conjecture~\cite{dias1995structural} were detected by the
informal method of checking a {\lq danger zone\rq} around the target eigenvalue, based on a cautious estimate of the numerical error of the eigenvalue routines (combined with the 
knowledge that a true eigenvalue $\sqrt{8}$ would be paired with another at $-\sqrt{8}$). 
\begin{figure}[h!t]
	\centering
	\includegraphics[width=0.6\textwidth]{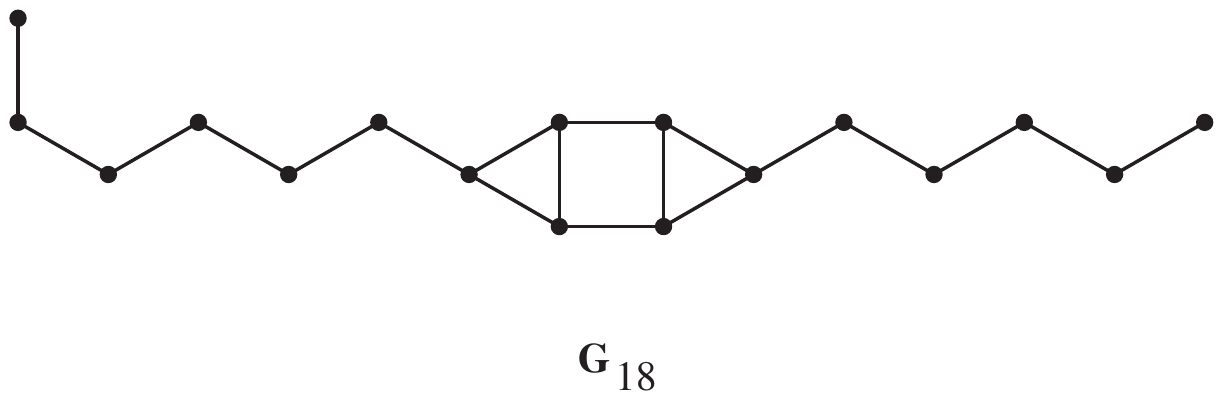}
	\caption{An $18$-vertex graph with Perron eigenvalue $\sqrt{8}-\epsilon$;
by taking the cartesian product of this graph with the star on 9 vertices,
a graph with a full eigenvector at eigenvalue $-\epsilon$ is produced.
In this case, $\epsilon \approx 5 \times 10\sp{-11}$.}
	\label{fig:false_pos}
\end{figure}
It is easy to use such an example to generate a graph with a full eigenvector that corresponds to a near-zero eigenvalue: simply take the direct product of $G_{18}$ with $S_9$, the star on nine vertices, which has smallest eigenvalue $-\sqrt{8}$. The product $G_a \square G_b$
has eigenvalues $\lambda_a + \lambda_b$, where $\lambda_a$ and $\lambda_b$ run over the spectra of $G_a$ and $G_b$, respectively, and entries in non-degenerate eigenvectors of $G_a \square G_b$ are simple products of the eigenvector entries of the starting graphs. In the present case, 
the $162$-vertex graph
$G_{18} \square S_9$ therefore has a non-degenerate eigenvalue 
$-\epsilon$ formed from the sum of the Perron eigenvalue of $G_{18}$ and the anti-Perron $-\sqrt8$ eigenvalue of the star. As both eigenvectors are full, this vector is also full. ($G_{18} \square S_9$ also has $14$ true zero eigenvalues, arising from the nullities of $2$ and $7$ of $G_{18}$ and $S_9$, respectively). 

A well known and more formal method for countering the inaccuracy problem is to use multiprecision integer arithmetic. Indeed, there are various methods of checking the number of zero eigenvalues of a candidate graph, and of determining the relative values of the entries in an
eigenvector that corresponds to a unique zero,
using only integer arithmetic, see for instance Longuet-Higgins~\cite{longuet1950some} 
elaborated by {\v Z}ivkovi{\' c}~\cite{zivkovic1972calculation}. These methods are easiest to implement for eigenvalues that are integers but the algorithms can be extended to algebraic integers in general, such as the eigenvalue $\sqrt 8$ mentioned above. (Note that eigenvalues of graphs are always algebraic integers.) 

The main disadvantage of these algorithms is that they are much slower than the classical methods, because their speed depends on the size of the numbers involved, and these grow quickly with larger graph orders. Tests indicate that for the problem at hand the multiprecision method is slower by at least an order of magnitude than the generation algorithm which we eventually used (cf.\ Section~\ref{subsection:modulo_p}).

\subsection{Properties of nut graphs}
For the reasons mentioned above, we compute the rank of a matrix directly without previous computation of the eigenvalues. Our algorithm is based on the following properties of adjacency matrices of nut graphs.

\begin{theorem}
\label{theo:nut}
Consider a graph $\Gamma$ with adjacency matrix
\[
A= 
\left(\!
\begin{array}{c|c}
B & b^T \\ \hline
b & 0 \\
\end{array}
\!\right).
\]
Then $\Gamma$ is a nut graph if and only if
\begin{enumerate}
\item $B$ is non-singular, 
\item $bB^{-1}b^T = 0$, and
\item $bB^{-1}$ has no zero entries.
\end{enumerate}
\end{theorem}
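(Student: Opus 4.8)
The plan is to unwind the definition: a graph $\Gamma$ on $n$ vertices is a nut graph exactly when $\ker A$ is one-dimensional and spanned by a vector with no zero entry. I would write a candidate kernel vector in block form $x = (y,\, t)^T$ with $y \in \R^{n-1}$ and $t \in \R$, compatibly with the displayed partition of $A$, so that $Ax = 0$ becomes the pair of equations $By + t\,b^T = 0$ and $b\,y = 0$. Two small facts get used throughout: $A$, hence $B$, is symmetric, so $B^{-1}$ (when it exists) is symmetric and $(B^{-1}b^T)^T = bB^{-1}$; in particular $B^{-1}b^T$ has a zero entry if and only if $bB^{-1}$ does.

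For the ``only if'' direction, suppose $\Gamma$ is a nut graph and let $x = (y,t)^T$ span $\ker A$, with all entries nonzero; in particular $t \neq 0$. The first step is to show $B$ is non-singular, which is the crux. The quick way is to quote the fact recalled above that removing a vertex from a nut graph lowers the nullity by exactly one, applied to the last vertex: its deletion leaves the graph with adjacency matrix $B$, so $B$ has nullity $0$. A self-contained alternative is a dichotomy: if $B$ were singular, choose $0 \neq z \in \ker B$; if $bz = 0$ then $(z,0)^T$ is a kernel vector of $A$ independent of $x$, contradicting nullity $1$, while if $bz \neq 0$ for some such $z$ then by symmetry $b^T \notin \operatorname{im} B$, so $By = -t\,b^T$ is unsolvable, contradicting $Ax=0$. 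Once $B$ is invertible, $By + tb^T = 0$ gives $y = -t\,B^{-1}b^T$; substituting into $by = 0$ gives $-t\,bB^{-1}b^T = 0$, hence (2) because $t \neq 0$; and since $y$ has no zero entry and $t \neq 0$, neither does $B^{-1}b^T$, hence neither does $bB^{-1}$, which is (3).

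For the ``if'' direction, assume (1)--(3), put $y := -B^{-1}b^T$ and $x := (y,\,1)^T$, and check $Ax = 0$ directly: the top block is $By + b^T = -b^T + b^T = 0$, and the bottom block is $by = -bB^{-1}b^T = 0$ by (2). Thus $x \in \ker A$ and $x \neq 0$ (its last entry is $1$), so the nullity is at least $1$. For equality, I would take an arbitrary $x' = (y',t')^T \in \ker A$; the top block forces $y' = -t'B^{-1}b^T = t'y$ (this is where non-singularity of $B$ is used), and the bottom block then holds automatically by (2), so $x' = t'x$ and $\ker A = \langle x\rangle$ is one-dimensional. Finally (3) says $y$, hence every entry of $x$, is nonzero, so $\Gamma$ is a nut graph.

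I expect the genuine obstacle to be precisely the non-singularity of $B$ in the forward direction: that is the only point where nullity one, absence of zero entries, and symmetry must all be brought to bear, whereas conditions (2), (3) and the whole converse are essentially formal manipulations once $B^{-1}$ is in hand. If the core-graph fact is to be avoided, the care needed is in the symmetry step, that $b^T \in \operatorname{im} B$ is equivalent to $bz = 0$ for every $z \in \ker B$. (The clause $n \ge 2$ in the definition of a nut graph is a corner case excluded by context here, since one tacitly assumes the block $B$ to be nonempty.)
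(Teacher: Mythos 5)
Your proof is correct and follows essentially the same route as the paper's: both identify the kernel vector $(-B^{-1}b^T,\,1)^T$ (the paper writes it as $(bB^{-1}\mid -1)$), reduce condition (2) to the vanishing of the Schur complement $bB^{-1}b^T$, and dispose of a singular $B$ by producing a kernel vector of $A$ whose last entry is zero. The only cosmetic difference is that the paper packages the rank computation as a block elimination and derives $b^T\in\operatorname{im}B$ directly from the relation $xB=b$, which makes the second branch of your dichotomy vacuous rather than a separate contradiction.
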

\begin{proof}
First assume that $B$ is non-singular. We multiply $A$ on the left with a non-singular matrix, as follows
\[
\left(\!
\begin{array}{c|c}
B^{-1} & 0 \\ \hline
-bB^{-1}  & 1 \\
\end{array}
\!\right)
\left(\!
\begin{array}{c|c}
B & b^T \\ \hline
b & 0 \\
\end{array}
\!\right)
=
\left(\!
\begin{array}{c|c}
1 & B^{-1}b^T \\ \hline
0 & -bB^{-1}b^T \\
\end{array}
\!\right)
\]
If $-bB^{-1}b^T \ne 0$ then the right hand matrix, and hence also $A$, has full rank. Otherwise, both matrices have rank $n-1$. 

In the latter case, consider the vector $(bB^{-1}\mid -1)$. We have
\[(bB^{-1}\mid -1) \left(\!
\begin{array}{c|c}
B & b^T \\ \hline
b & 0 \\
\end{array}
\!\right)
=
(0 \mid bB^{-1}b^T) = (0\mid 0).\]
Hence $(bB^{-1}\mid -1)$ is a (non-trivial) eigenvector of $A$. If this vector contains no zero entries, 
the resulting graph is a nut graph.

Conversely, assume $\Gamma$ is a nut graph and consider a kernel vector of $A$. Because the last entry of this kernel vector is non-zero, we may always multiply the vector with a scalar to obtain a kernel vector of the form
 $(x \mid -1)$. From $(x \mid -1)A = 0$ we find $xB = b$ and $xb^T = 0$. 

If $B$ has an inverse, we find  
$x = bB^{-1}$ and  the theorem follows.
Otherwise, let $y$ denote a non-trivial kernel vector of $B$. Then $(y\mid 0) A = (0\mid yb^T) = (0\mid yBx^T) = (0\mid 0)$. Hence $(y\mid 0)$ is a kernel vector of $A$ with an entry equal to zero, and $\Gamma$ cannot be a nut graph.
\end{proof}

Alternative proofs of the properties in  Theorem \ref{theo:nut}
can for instance be found in \cite{Sciriha2008}.
\begin{lemma}
\label{lemma-adj}
A graph with adjacency matrix $A$ is a nut graph if and only if
$\det A = 0$ and $\adj A$ has no zero entries.
\end{lemma}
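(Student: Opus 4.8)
The statement to prove is Lemma~\ref{lemma-adj}: a graph with adjacency matrix $A$ is a nut graph if and only if $\det A = 0$ and $\adj A$ has no zero entries. The plan is to connect this characterisation with Theorem~\ref{theo:nut}, using the classical identity $A\,\adj A = (\det A)\,I$. First I would observe that if $\Gamma$ is a nut graph then $A$ is singular (nullity $1$ means $\det A = 0$), so $A\,\adj A = 0$; hence every column of $\adj A$ lies in the kernel of $A$. Since the kernel is one-dimensional and spanned by a full vector (no zero entries), each column of $\adj A$ is a scalar multiple of that full kernel vector. It then remains to show that at least one column of $\adj A$ is nonzero, equivalently that $\rank A = n-1$ rather than smaller: a standard fact is that $\rank(\adj A) = 1$ when $\rank A = n-1$, and $\adj A = 0$ when $\rank A \le n-2$. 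Because a nut graph has rank exactly $n-1$, some column of $\adj A$ is a nonzero multiple of the full kernel vector; but every column of $\adj A$ is \emph{some} (possibly zero) multiple of that vector, and by symmetry of $A$ the matrix $\adj A$ is symmetric, so a zero column would force a zero row and hence a zero diagonal entry of $\adj A$. The diagonal cofactor $(\adj A)_{ii}$ is $\pm\det$ of the principal submatrix obtained by deleting row and column $i$, i.e.\ the adjacency matrix of the vertex-deleted graph $\Gamma - v_i$; for a nut graph this has full rank (nullity drops by one), so it is nonzero. Hence no column of $\adj A$ vanishes, and every entry of $\adj A$ is a nonzero multiple of a nonzero entry of the full kernel vector, so $\adj A$ has no zero entries.

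\medskip

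For the converse, suppose $\det A = 0$ and $\adj A$ has no zero entries. From $\det A = 0$ we get $A\,\adj A = 0$, so the nonzero columns of $\adj A$ lie in $\ker A$; since $\adj A \ne 0$, we have $\ker A \ne \{0\}$, and since $\adj A$ has a nonzero (in fact, entirely nonzero) column, $\rank(\adj A) \ge 1$. The key point is to rule out $\rank A \le n-2$: in that case a classical determinant identity gives $\adj A = 0$, contradicting the hypothesis. Therefore $\rank A = n-1$, so $\ker A$ is one-dimensional; any entirely-nonzero column of $\adj A$ is then a full kernel vector, which shows both that the nullity is exactly $1$ and that the kernel is spanned by a vector with no zero entries. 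Hence $\Gamma$ is a nut graph.

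\medskip

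An alternative, slightly more self-contained route avoids quoting $\rank(\adj A)$ facts: one can instead write $A$ in the block form of Theorem~\ref{theo:nut} (after relabelling so that the last vertex is arbitrary) and translate conditions 1--3 there into statements about cofactors, using that $\adj A$ entries in the last row/column are, up to sign, the $(n-1)\times(n-1)$ minors that appear when one expands $\det A$ and $bB^{-1}$ via Cramer's rule. I would likely present the first route as the main argument since it is shortest, and mention Theorem~\ref{theo:nut} only as motivation.

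\medskip

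The main obstacle is the bookkeeping around degenerate cases: one must be careful to exclude $\rank A \le n-2$ in \emph{both} directions and to argue that a full column of $\adj A$ really is a kernel vector of $A$ (not merely orthogonal to the rows in some weaker sense) — this is where the identity $A\,\adj A = (\det A)\,I$ and the symmetry of $A$ do the real work. Everything else is a short chain of standard linear-algebra facts about the adjugate.
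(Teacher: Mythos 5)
Your proof is correct, and its skeleton is the one the paper uses: from $A\,\adj A=(\det A)I=0$ the columns of $\adj A$ are kernel vectors; the rank-of-adjugate facts ($\adj A=0$ when $\rank A\le n-2$, $\adj A\ne 0$ when $\rank A=n-1$) dispose of the degenerate nullities; and the converse direction is essentially identical in both versions. Where you diverge is the key step of the forward direction, showing that \emph{no} column of $\adj A$ vanishes. The paper writes the $i$th column as $\alpha_i x$ for the full kernel vector $x$, sets $y=(\alpha_1,\dots,\alpha_n)$ so that $\adj A=x^Ty$, and uses the symmetry of $\adj A$ to conclude that $y$ is itself a kernel vector, hence a nonzero multiple of $x$ and therefore full — making every entry $x_i\alpha_j$ nonzero at once. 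You instead use symmetry only to reduce a hypothetical zero column to a zero diagonal entry, and then identify $(\adj A)_{ii}$ with $\det$ of the adjacency matrix of the vertex-deleted graph $\Gamma-v_i$, which is nonzero because vertex deletion in a nut graph drops the nullity to zero. That step is valid, but note it imports an external fact: the nonsingularity of vertex-deleted subgraphs is exactly property~1 of Theorem~\ref{theo:nut} (equivalently the core-graph property quoted in the introduction), so you should cite one of those rather than treat it as free; the paper's own argument is self-contained within the lemma and needs only the symmetry of $\adj A$ plus one-dimensionality of the kernel. In compensation, your route gives the pleasant combinatorial reading of the diagonal cofactors as determinants of vertex-deleted subgraphs, and the rank-one factorisation $\adj A = x^Ty$ that the paper makes explicit (and reuses later in Theorem~\ref{theo-lift-3}) is only implicit in your write-up.
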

\begin{proof}
A matrix $A$ has nullity 1 if and only if $\det A = 0$ and $\adj A\ne 0$.
Since $A \adj A = (\det A)1 = 0$, the columns of $\adj A$ are eigenvectors
of $A$ (possibly 0). Since $A$ has rank $n-1$ every column of $\adj A$
must be a multiple of a fixed non-trivial eigenvector $x^T$ of $A$. 

Write $\alpha_i x$ for the 
$i$th column of $\adj A$. Then $y=(\alpha_1,\ldots,\alpha_n)$ is non-zero. Moreover, every row of $\adj A$ is a multiple of $y$. In fact: $\adj A = x^Ty$.

As $\adj A$ is symmetric, the rows
of $\adj A$ are eigenvectors of $A$ and hence so is $y$. If $\Gamma$ is not a nut graph, then $\alpha_i=0$ for at least one index $i$ and $\adj A$ contains at least one zero column (and row). If $\Gamma$ is a nut graph,
then neither $x$ nor $y$ contains a zero entry, and hence all entries
of $\adj A$ are non-zero.
\end{proof}

\subsection{Generation algorithm}
\label{subsection:generation_algorithm}
The algorithm that was developed here
for generating all nut graphs of a given order~$n$ is an adaptation of the general canonical construction path method for isomorph-free generation of graphs, pioneered by McKay~\cite{mckay_98}. 

Essentially, the graphs of order $k$ are generated from the graphs of order $k-1$ by recursively adding a new vertex and connecting it in all possible ways to the vertices already generated, pruning isomorphic copies on the way. 

To this basic algorithm we add two filtering/pruning steps:
\begin{enumerate}
\item Only those graphs of order $n-1$ are retained that are non-singular, cf.\ property 1 of Theorem~\ref{theo:nut}.
\item Only those graphs of order $n$ are retained that satisfy properties 2 and 3 of Theorem~\ref{theo:nut}.
\end{enumerate}
In addition, the inverse $A^{-1}$ of the adjacency matrix $A$ of a graph retained in step 1 above, is stored in memory so it can be reused for the second step. In general, a single graph of order $n-1$ gives rise to a large number (often hundreds or thousands) of graphs of order~Ò$n$ so this turns out to be a very effective optimisation.

To check whether the adjacency matrix $A$ is non-singular and then compute its inverse, the standard Gauss-Jordan algorithm from linear algebra can be used, which essentially amounts to computing the echelon form of the augmented matrix $(A\mid 1)$ to yield $(1\mid A^{-1})$. 

This algorithm has an asymptotic complexity of $O(n^3)$ under the assumption that all standard arithmetic operators take constant time. (In practice, however, this assumption is valid only with computer operations that use a fixed number of computer bits, in particular, not with multiprecision arithmetic).  
Since, as we mentioned above, it is essential to avoid rounding errors, the algorithm cannot make use of (finite precision) floating point operations. 

As the matrix $A$ has integral entries (in fact, it consists of only ones and zeros) we could instead use multiprecision rational arithmetic, i.e.\  work with exact fractions. Unfortunately, the numerators and denominators involved become very large very quickly, making this feasible only for graphs of small order. It is possible to adapt the algorithm so that
division can be avoided, i.e.\  work with multiprecision integers instead of rationals, but this does not improve the execution time significantly.

Instead we use a different approach, based on modular arithmetic, which turns out to be more efficient.

\subsection{Computations modulo $p$}
\label{subsection:modulo_p}
%Because exact computations in the field $\Q$ of rational numbers  are 
% computationally expensive (requiring the use of multiprecision arithmetic) 
% and computations in $\R$ are fundamentally inaccurate (because of rounding errors) 
The main idea  is to perform most of the work using arithmetic  `modulo $p$' for some suitable primes $p$. Theorems \ref{theo-lift}--\ref{theo-lift-3} below show that we can then `lift' our results to the real numbers $\R$, provided we do this for a sufficient number of  primes $p$. We can choose the primes to be quite large, as long as they do not surpass the size of a word for which a computer can perform fast division and multiplication (say, $p \approx 2^{31}$ for present-day computers).

Denote by $\Delta_n$ the maximum absolute value of the (real) determinant of a 0-1 matrix of size $n\times n$. We have
\begin{theorem}[Hadamard]
\[
\Delta_n \le 2\left(\frac{n+1}4\right)^{\displaystyle{\frac {n+1}2}}.
\]
\end{theorem}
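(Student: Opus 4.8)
The plan is to bound the determinant via the classical argument that relates $|\det M|$ for a real matrix $M$ to the volume of the parallelepiped spanned by its rows, then combine this with an optimisation over the allowed entry values. First I would recall Hadamard's inequality in its general form: for any real $n\times n$ matrix $M$ with rows $r_1,\dots,r_n$, one has $|\det M| \le \prod_{i=1}^n \|r_i\|_2$, where $\|\cdot\|_2$ is the Euclidean norm. This follows from the Gram determinant identity $(\det M)^2 = \det(MM^T)$ together with the fact that $\det$ of a positive semidefinite matrix is at most the product of its diagonal entries (itself provable by induction on $n$ using a Schur complement, or by the AM--GM inequality applied to the eigenvalues after normalising). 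For a $0$-$1$ matrix each row satisfies $\|r_i\|_2^2 \le n$, which already gives $\Delta_n \le n^{n/2}$; but this is weaker than the stated bound, so a refinement is needed.

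The sharper bound comes from the standard trick of passing from $0$-$1$ matrices to $\pm 1$ matrices. Given a $0$-$1$ matrix $M$ of size $n\times n$, form the $(n+1)\times(n+1)$ matrix $M'$ by bordering $M$ with a first row and first column of $1$'s (with a $1$ in the corner); subtracting the first row from every other row and expanding shows $|\det M'| = 2^{n}\,|\det M|$ — more precisely, the row operations turn the lower-right $n\times n$ block into $M - J$ with entries in $\{-1,0\}$... so instead I would use the cleaner version: replace each $0$ entry by $-1$ after the bordering step. Concretely, bordering $M$ by a row and column of ones and then subtracting the first row from each subsequent row replaces each original entry $m_{ij}\in\{0,1\}$ by $m_{ij}-1\in\{-1,0\}$; negating those rows gives a $\{0,1\}$ block again, but a slicker route is: bordering produces a matrix whose determinant equals (up to sign and a factor $2^n$) the determinant of an $n\times n$ matrix with entries $2m_{ij}-1\in\{\pm1\}$. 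Either way one arrives at $\Delta_{n} \le 2^{-n}\,H_{n+1}$, where $H_m$ denotes the maximum absolute determinant of an $m\times m$ matrix with entries in $\{\pm 1\}$.

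It then remains to bound $H_{m}$ for $m=n+1$. Applying Hadamard's inequality to a $\pm1$ matrix of size $m$ gives $H_m \le (\sqrt m)^m = m^{m/2}$, since every row has Euclidean norm exactly $\sqrt m$. Substituting $m=n+1$ yields
\[
\Delta_n \;\le\; 2^{-n}\,(n+1)^{(n+1)/2} \;=\; 2\cdot 2^{-(n+1)}\,(n+1)^{(n+1)/2} \;=\; 2\left(\frac{n+1}{4}\right)^{(n+1)/2},
\]
which is exactly the claimed inequality. The main obstacle is getting the bordering/row-reduction bookkeeping exactly right so that the factor $2^n$ (and the reduction in matrix size by one) is accounted for without sign or off-by-one errors; the two analytic ingredients (Hadamard's inequality and the norm bound $\|r_i\|_2 \le \sqrt{n}$) are routine. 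One should also note in passing that the bound is not claimed to be tight — equality in Hadamard's inequality requires an orthogonal set of rows of equal norm, i.e.\ a Hadamard matrix of order $n+1$, which exists only for special $n$ — but tightness is not needed here, since the theorem is used only to certify that sufficiently many primes $p$ suffice for the modular lifting.
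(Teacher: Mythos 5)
The paper gives no proof of this statement at all --- it is quoted as a classical result of Hadamard (with a pointer to the exact small-$n$ values from Orrick) --- so there is no internal argument to compare against; your derivation is the standard one and it is correct. The two ingredients are exactly right: Hadamard's inequality applied to an $(n+1)\times(n+1)$ matrix with entries $\pm 1$ gives the bound $(n+1)^{(n+1)/2}$, and the bordering construction shows $2^{n}\,\Delta_n \le H_{n+1}$, which after the substitution $2^{-n}(n+1)^{(n+1)/2} = 2\left(\frac{n+1}{4}\right)^{(n+1)/2}$ is precisely the stated bound. The only blemish is the visibly self-corrected bookkeeping in your middle paragraph: bordering the $0$-$1$ matrix $M$ itself by a row and column of ones and row-reducing yields the block $M-J$ and does \emph{not} produce the factor $2^{n}$; the version that works is the one you settle on, namely to border the $\pm 1$ matrix with lower-right block $J-2M$ (entries $1-2m_{ij}$) by a first row and column of ones, so that subtracting the first row from the others leaves the block $-2M$ and $\det = (-2)^{n}\det M$. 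With that single construction stated cleanly, the proof is complete, and your closing remark is also apt: tightness (equivalent to the existence of a Hadamard matrix of order $n+1$) is irrelevant here, since the theorem is only used to bound how many primes are needed for the modular lifting.
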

Equality is possible, e.g. $\Delta_3=2$ is reached by the adjacency matrix $\left(\begin{smallmatrix}0&1&1\\1&0&1\\1&1&0\end{smallmatrix}\right)$ of the complete graph $K_3$.

For small values of $n$, exact values of $\Delta_n$ have been obtained \cite{Orrick2005}~:
\[
\begin{array}{c|*{20}c}
n & 2 & 3 & 4 & 5 & 6 & 7 & 8   \\
\hline
\Delta_n & 1 & 2 & 3 & 5 & 9 & 32 & 56  \\
\\
n & 9 & 10 & 11 & 12& 13 & 14& 15\\
\hline
\Delta_n  & 144 & 320 & 1458 & 3645 &9477 & 25515& 131072\\
\\
n  & 16 & 17 & 18 & 19 & 20 \\
\hline
\Delta_n& 327680 & 1114112 & 3411968 & 19531250 & 56640625 \\
\end{array}
\]

Let $A$ be a matrix with integral entries. Let $p$ be a prime. We write $\det_{p} A$ (resp.\ $\rank_{p} A$, $\adj_{p} A$)
for the determinant (resp.\ rank, adjugate) of $A$ over the finite field $\F_{p}$. Recall that $\rank_{p} A$ is also called the $p$-rank of $A$. We have the following properties:

\begin{theorem}
\label{theo-lift}
Let $A$ be a symmetric 0-1-matrix of size $n\times n$. Let $p_1, p_2, \ldots p_k$ 
be distinct primes such that $p_1p_2\cdots p_k > \Delta_n$. 
Then $A$ is non-singular over $\R$  if and only if $A$ is non-singular over $\F_{p_i}$ for at least one $i$, $1\le i \le k$. Or equivalently: $\det A = 0$ if and only if $\det_{p_i} A = 0$ 
for all $i$, $1\le i \le k$. 
\end{theorem}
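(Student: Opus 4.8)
The plan is to reduce the whole statement to an elementary fact about a single integer, the determinant $d = \det A$, and then to quote the Hadamard bound together with unique factorisation of the integers.

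First I would record the (standard) fact that reduction modulo a prime commutes with the determinant: because $\det$ is a polynomial in the matrix entries with integer coefficients, the element $\det_{p_i} A \in \F_{p_i}$ is exactly the residue of $d$ modulo $p_i$. Hence $A$ is singular over $\F_{p_i}$ if and only if $p_i \mid d$, and $A$ is singular over $\R$ if and only if $d = 0$. This already shows that the two displayed formulations of the theorem are equivalent (one is the contrapositive of the other, with the disjunction ``for at least one $i$'' turning into the conjunction ``for all $i$''), so it suffices to prove the single statement: $d = 0$ if and only if $p_i \mid d$ for all $i$ with $1 \le i \le k$.

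The forward implication is trivial. For the converse, assume $p_i \mid d$ for every $i$. Since $p_1, \dots, p_k$ are \emph{distinct} primes, unique factorisation of integers gives $p_1 p_2 \cdots p_k \mid d$; therefore either $d = 0$ or $|d| \ge p_1 p_2 \cdots p_k$. But $A$ is a $0$-$1$ matrix, so $|d| \le \Delta_n$ by the Hadamard bound, and by hypothesis $\Delta_n < p_1 p_2 \cdots p_k$. The inequality $|d| \ge p_1 p_2 \cdots p_k$ is thus impossible, which forces $d = 0$ and completes the proof.

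I do not expect any genuine obstacle here: once the compatibility of determinants with reduction mod $p$ is noted, the argument is a one-line consequence of the Hadamard bound. The only conceptual point worth emphasising is why $\Delta_n$ enters at all — it is precisely an a priori size estimate for $|\det A|$, and it is what dictates how large the product $p_1 \cdots p_k$ of the chosen primes must be. (Symmetry of $A$ plays no role in this particular statement; it is assumed only because the matrices of interest are adjacency matrices, and will matter for the adjugate version in Theorems~\ref{theo-lift}--\ref{theo-lift-3}.)
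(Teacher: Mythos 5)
Your proposal is correct and follows essentially the same route as the paper's proof: reduce to the single integer $d=\det A$ via compatibility of the determinant with reduction mod $p_i$, then use that $d$ divisible by all the distinct primes forces $p_1\cdots p_k \mid d$, which together with $|d|\le\Delta_n < p_1\cdots p_k$ gives $d=0$. No gaps; your remark that symmetry of $A$ is irrelevant here is also accurate.
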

\begin{proof}
Because all entries of $A$ are integers, we have $\det_{p_i} A = \det A\bmod {p_i}$. Note that $\det_{p_i} A = 0$ if and only if
$\det A$ is divisible by $p_i$. 

Hence, if $\det A = 0$ then also $\det_{p_i} A = 0$, for all $i$. Conversely,
if $\det_{p_i} A = 0$ for all $i$, then $\det A$ must be divisible by all $p_i$, and hence by their product. Since $|\det A| \le \Delta_n < \prod p_i$, this
is only possible when $\det A = 0$.
\end{proof}

\begin{theorem}
\label{theo-lift-2}
Let $A$ be a symmetric 0-1-matrix of size $n\times n$. Let $p_1, p_2, \ldots p_k$ 
be distinct primes such that $p_1p_2\cdots p_k > \Delta_n$. 
Then $\rank A =n-1$ if and only if 
$\rank_{p_i} A \le n-1$ for all $i$, $1\le i \le k$, with equality in at least one case.
\end{theorem}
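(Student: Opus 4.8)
The plan is to reduce everything to two elementary facts about integer matrices and their reductions modulo a prime. The first is that for any integer matrix $A$ and any prime $p$ one has $\rank_p A \le \rank A$: if $\rank_p A = r$, then $A$ has an $r\times r$ submatrix whose determinant is nonzero modulo $p$, hence nonzero as an integer, so $\rank A \ge r$. The second is the minor characterisation $\rank A = n-1$ $\iff$ $\det A = 0$ and at least one $(n-1)\times(n-1)$ minor of $A$ is nonzero; the $(n-1)\times(n-1)$ minors are, up to sign, exactly the entries of $\adj A$, and the same characterisation holds verbatim over $\F_p$. With these in hand, the statement splits into the two implications.

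For the forward implication, suppose $\rank A = n-1$. Then $\det A = 0$, so $\det_{p_i} A = 0$ for every $i$ by Theorem~\ref{theo-lift}, giving $\rank_{p_i} A \le n-1$ for all $i$. It remains to produce one index $i$ with $\rank_{p_i} A = n-1$. Pick an $(n-1)\times(n-1)$ submatrix $M$ of $A$ with $\det M \ne 0$. Since $M$ is itself a $0$-$1$ matrix, $|\det M| \le \Delta_{n-1} \le \Delta_n$, where the inequality $\Delta_{n-1}\le\Delta_n$ follows by bordering an extremal $(n-1)\times(n-1)$ matrix with a single $1$ in the new diagonal position and zeros elsewhere in the new row and column. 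Thus $0 < |\det M| \le \Delta_n < p_1\cdots p_k$, so $\det M$ cannot be divisible by all the $p_i$; choosing $p_i \nmid \det M$ gives $\det_{p_i} M \ne 0$, whence $\rank_{p_i} A \ge n-1$, and therefore $\rank_{p_i} A = n-1$.

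For the converse, assume $\rank_{p_i} A \le n-1$ for all $i$, with $\rank_{p_j} A = n-1$ for some $j$. From $\rank_{p_i} A \le n-1$ we get $\det_{p_i} A = 0$ for every $i$, hence $\det A = 0$ by Theorem~\ref{theo-lift}, so $\rank A \le n-1$. On the other hand, $\rank_{p_j} A = n-1$ yields an $(n-1)\times(n-1)$ submatrix of $A$ with determinant nonzero modulo $p_j$, hence nonzero over $\R$, so $\rank A \ge n-1$. Combining the two bounds gives $\rank A = n-1$.

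I do not expect a genuine obstacle: the argument is a routine Hadamard-bound / Chinese-remainder lift, entirely parallel to the proof of Theorem~\ref{theo-lift}. The only point that needs a word of care is that the minors governing $\rank A = n-1$ have size $n-1$ rather than $n$, so one must first observe $\Delta_{n-1}\le\Delta_n$ (equivalently, re-derive the Hadamard-type bound for $(n-1)\times(n-1)$ matrices) before the hypothesis $p_1\cdots p_k > \Delta_n$ can be applied to those minors.
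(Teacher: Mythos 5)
Your proof is correct and follows essentially the same route as the paper's: the paper works with the entries of $\adj A$ (the signed $(n-1)\times(n-1)$ cofactors) where you work directly with $(n-1)\times(n-1)$ minors, but both arguments hinge on the same two points, namely Theorem~\ref{theo-lift} for the determinant and the bound $\Delta_{n-1}\le\Delta_n<p_1\cdots p_k$ applied to a nonzero cofactor to find a prime $p_i$ not dividing it. Your explicit remark that the relevant minors have size $n-1$, so that $\Delta_{n-1}\le\Delta_n$ must be checked, is exactly the point the paper also flags.
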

\begin{proof} 
We obtain $\adj_{p_i} A$ by reducing every entry 
of $\adj A$  modulo $p$. (Note that $\adj A$ has integral entries.)
Each entry $\adj A$ is the determinant of a 0-1-matrix
of size $n-1\times n-1$
(viz.\ the corresponding co-factor). Hence if $\prod p_i > \Delta_{n} \ge \Delta_{n-1}$,
an entry which is zero in $\adj_{p_i} A$ for all $i$, must correspond exactly to a zero entry in $\adj A$.

If $A$ has rank $n-1$, then $\det A = 0$ and $\adj A\ne 0$. Hence $\det_{p_i} A = 0$ and the rank of $A$  over $\F_{p_i}$ is at most $n-1$, for every $i$. Also $\adj_{p_i} A\ne 0$ for at least one $i$ and hence $\rank_{p_i} A = n-1$ in that case. Conversely, if $\rank_{p_i} > n-1$ for at least one $i$, then $\det A\ne 0$. Also if $\rank_{p_i} < n-1$ for all $i$, then $\adj_{p_i} A = 0$ for all $i$ and therefore $\adj A = 0$, which implies $\rank A < n-1$.
\end{proof}

\begin{theorem}
\label{theo-lift-adj}
Let $\Gamma$ be a graph of order $n$ with adjacency matrix $A$ of rank $n-1$.  Let $p_1, p_2, \ldots p_k$ 
be distinct primes such that $p_1p_2\cdots p_k > \Delta_n$. Then $\Gamma$ is a nut graph if and only if for each element position $(g,h), 1 \le g,h \le n$, there is at least one $i, 1\le i\le k$ such that $(\adj_{p_i} A)_{g,h} \ne 0$.
\end{theorem}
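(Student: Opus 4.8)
The plan is to reduce the statement directly to Lemma~\ref{lemma-adj} and then re-use the divisibility argument that already underlies Theorem~\ref{theo-lift-2}. Since we are given $\rank A = n-1$, we automatically have $\det A = 0$, so Lemma~\ref{lemma-adj} tells us that $\Gamma$ is a nut graph if and only if $\adj A$ has no zero entries. It therefore suffices to prove the entrywise equivalence: for each position $(g,h)$, we have $(\adj A)_{g,h} \neq 0$ if and only if $(\adj_{p_i} A)_{g,h} \neq 0$ for at least one $i$.

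First I would recall, as in the proof of Theorem~\ref{theo-lift-2}, that $A$ being a $0$-$1$ matrix makes $\adj A$ an integer matrix whose $(g,h)$ entry is, up to sign, the determinant of the $(n-1)\times(n-1)$ submatrix of $A$ obtained by deleting a row and a column; this submatrix is again a $0$-$1$ matrix, so $|(\adj A)_{g,h}| \le \Delta_{n-1}$. Next I would note the monotonicity $\Delta_{n-1} \le \Delta_n$: bordering an $(n-1)\times(n-1)$ $0$-$1$ matrix with a single $1$ in the new diagonal position and zeros elsewhere in the new row and column leaves the determinant unchanged, so every value attained by $\Delta_{n-1}$ is attained at size $n$. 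Hence $|(\adj A)_{g,h}| \le \Delta_n < p_1 p_2 \cdots p_k$.

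The key step is then purely arithmetic. Since reduction modulo $p_i$ commutes with forming the adjugate of an integer matrix (we obtain $\adj_{p_i} A$ by reducing each entry of $\adj A$ modulo $p_i$), we have $(\adj_{p_i} A)_{g,h} \equiv (\adj A)_{g,h} \pmod{p_i}$, so $(\adj_{p_i} A)_{g,h} = 0$ exactly when $p_i$ divides $(\adj A)_{g,h}$. Thus $(\adj_{p_i} A)_{g,h} = 0$ for all $i$ if and only if $(\adj A)_{g,h}$ is divisible by the product $p_1 p_2 \cdots p_k$, and by the bound of the previous paragraph this forces $(\adj A)_{g,h} = 0$. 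Contrapositively, $(\adj A)_{g,h} \neq 0$ if and only if $(\adj_{p_i} A)_{g,h} \neq 0$ for some $i$. Quantifying over all pairs $(g,h)$ and substituting back into Lemma~\ref{lemma-adj} yields the theorem.

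I do not expect a genuine obstacle here: the result is essentially a corollary of Lemma~\ref{lemma-adj} together with Theorem~\ref{theo-lift-2}. The only place that warrants care is the bound $|(\adj A)_{g,h}| \le \Delta_n$ — one must check both that the cofactors really are determinants of $0$-$1$ matrices (clear, since deleting a row and column of a $0$-$1$ matrix again gives a $0$-$1$ matrix) and that $\Delta_{n-1} \le \Delta_n$, i.e. the monotonicity remark above, so that the chosen primes, whose product exceeds $\Delta_n$, are sufficient to certify non-vanishing of the integer cofactors and not merely of $\det A$.
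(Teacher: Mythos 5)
Your proposal is correct and follows essentially the same route as the paper: reduce to Lemma~\ref{lemma-adj} (with $\det A = 0$ supplied by the hypothesis $\rank A = n-1$) and then certify each cofactor entrywise via the bound $|(\adj A)_{g,h}| \le \Delta_{n-1} \le \Delta_n < p_1\cdots p_k$, exactly the divisibility argument the paper recycles from Theorem~\ref{theo-lift-2}. Your explicit justification of the monotonicity $\Delta_{n-1}\le\Delta_n$ is a detail the paper asserts without proof, but there is no substantive difference.
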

\begin{proof}
The proof runs along the same lines as the proof of Theorems \ref{theo-lift} and 
\ref{theo-lift-2}. By Lemma \ref{lemma-adj}, $\Gamma$ is a nut graph if and only if
$\adj A$ contains no zero entries. An entry of  $\adj A$ is zero if and only if
the corresponding entry of $\adj_{p_i} A$ is zero, for all $i, 1\le i\le k$.
\end{proof}

Theorems \ref{theo-lift} and \ref{theo-lift-2} provide fast ways to check whether the adjacency matrix of a graph has nullity 0 or 1, with a complexity of $O(n^3\log \Delta_n) \approx O(n^4\log n)$. Theorem \ref{theo-lift-adj} is less useful in the general case, because it involves computing the adjugate matrix. For small values of $n$ (such that $\Delta_n < 2^{32}$) we may however use the following
\begin{corollary}
\label{cor:mod_p}
Let $\Gamma$ be a graph of order $n$ with adjacency matrix $A$ of rank $n-1$.  Let $p$ be prime, $p > \Delta_n$. Then $\Gamma$ is a nut graph if and only if it is a nut graph `modulo $p$'.
\end{corollary}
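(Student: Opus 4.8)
The plan is to treat this as the single-prime special case ($k=1$, $p_1=p$) of Theorem~\ref{theo-lift-adj}; the only real work is to say precisely what ``nut graph modulo $p$'' should mean and to check that one prime larger than $\Delta_n$ already carries all the information needed to lift back to $\R$.

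First I would pin down the mod-$p$ notion. Lemma~\ref{lemma-adj} and its proof use only linear algebra over a field, so they hold verbatim over $\F_{p}$: the matrix $A$, read over $\F_{p}$, describes a nut graph precisely when $\det_{p} A = 0$ and $\adj_{p} A$ has no zero entry. Now I would use the hypothesis $\rank A = n-1$ (over $\R$) twice. It forces the integer $\det A$ to vanish, hence $\det_{p} A = \det A \bmod p = 0$ for \emph{every} prime $p$; and it forces $\adj A \neq 0$, so some entry of $\adj A$ is a nonzero integer which, being (up to sign) an $(n-1)\times(n-1)$ minor of the $0$-$1$ matrix $A$, has absolute value at most $\Delta_{n-1}\le\Delta_n<p$ and therefore stays nonzero modulo $p$; hence $\adj_{p} A \neq 0$ and $\rank_{p} A = n-1$ as well. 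The upshot is that, under the standing hypothesis, both ``$\Gamma$ is a nut graph'' and ``$\Gamma$ is a nut graph modulo $p$'' reduce, via Lemma~\ref{lemma-adj} over $\R$ and over $\F_{p}$ respectively, to a single assertion about which entries of $\adj A$, resp.\ $\adj_{p} A$, are zero.

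The last step is the size argument already used (with several primes) in the proof of Theorem~\ref{theo-lift-adj}: every entry of $\adj A$ is an $(n-1)\times(n-1)$ cofactor of a $0$-$1$ matrix, so its absolute value is at most $\Delta_{n-1}\le\Delta_n<p$, and hence it is divisible by $p$ if and only if it is $0$. Thus $(\adj_{p} A)_{g,h} = 0$ exactly when $(\adj A)_{g,h} = 0$ for each position $(g,h)$, and chaining the equivalences --- $\Gamma$ nut graph $\iff$ $\adj A$ has no zero entry $\iff$ $\adj_{p} A$ has no zero entry $\iff$ $\Gamma$ nut graph modulo $p$ --- completes the proof. I do not expect a genuine obstacle here: the one thing to be careful about is making the definition of ``nut graph modulo $p$'' explicit and noticing that the real-rank hypothesis already supplies $\det_{p} A = 0$ and $\rank_{p} A = n-1$ for free, so that the single Hadamard bound $\Delta_n < p$ does all the lifting that $k$ primes do in the more general statements.
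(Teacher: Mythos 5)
Your proof is correct and follows essentially the same route as the paper, which obtains the corollary as the $k=1$ instance of Theorem~\ref{theo-lift-adj}; your argument simply unwinds that theorem's proof (Lemma~\ref{lemma-adj} applied over $\R$ and over $\F_p$, plus the bound $\Delta_{n-1}\le\Delta_n<p$ on the cofactors) for a single prime. Your explicit observation that the real-rank hypothesis already forces $\det_p A=0$ and $\rank_p A=n-1$ is a worthwhile clarification of what ``nut graph modulo $p$'' must mean, but it is not a different method.
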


Note that the condition $p > \Delta_n$ cannot simply be waived. Indeed, in the course of our experiments we found seven (IPR fullerene) graphs of between 278 
and 300 vertices that are nut graphs `modulo $p$' (with $p=2^{32}-5 = 4\,294\,967\,291$) 
but that in reality turned out to be non-singular (with a determinant divisible by $p$).

For larger graphs we can use the following variant of Theorem~\ref{theo-lift-adj}:
\begin{theorem}
\label{theo-lift-3}
Let $\Gamma$ be a graph of order $n$ with adjacency matrix $A$ of rank $n-1$.  Let $p_1, p_2, \ldots p_k$ 
be distinct primes such that $p_1p_2\cdots p_k > \Delta_n$ and such that $\rank_{p_i} A = n-1$. Then $\Gamma$ is a nut graph if and only if for each coordinate position $g$, $1 \le g \le n$, there is at least one $i, 1\le i\le k$ such that a corresponding (non-trivial) kernel vector $v_i$ of $A$, computed modulo $p_i$, has $(v_i)_g\ne 0$.
\end{theorem}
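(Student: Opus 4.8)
The plan is to reduce the statement to a property of a single integer kernel vector of $A$ and then run the same Hadamard-bound lifting argument that was used in the proofs of Theorems~\ref{theo-lift} and~\ref{theo-lift-2}.

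First I would produce a convenient integer kernel vector. Since $\rank A = n-1$ we have $\adj A \neq 0$, and from $A\,\adj A = (\det A)\,1 = 0$ every column of $\adj A$ lies in the kernel of $A$ over $\R$, which is one-dimensional. Choosing a nonzero column and dividing out the gcd of its entries yields a primitive integer vector $w$ with $Aw = 0$ (unique up to sign). Each entry $w_g$ divides the corresponding cofactor of $A$, which is, up to sign, the determinant of an $(n-1)\times(n-1)$ submatrix of the $0$-$1$ matrix $A$; hence $|w_g| \le \Delta_{n-1} \le \Delta_n$ for all $g$. Moreover, since every non-trivial kernel vector of $A$ over $\R$ is a scalar multiple of $w$ and the nullity of $A$ equals $1$ by hypothesis, the definition of a nut graph (or Lemma~\ref{lemma-adj}) gives: $\Gamma$ is a nut graph if and only if $w_g \neq 0$ for every $g$.

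Next I would identify, for each $i$, the mod-$p_i$ kernel vector with the reduction of $w$. Reducing $Aw = 0$ modulo $p_i$ shows that $w \bmod p_i$ lies in $\ker_{\F_{p_i}} A$, and it is nonzero because $w$ is primitive (its entries are not all divisible by $p_i$). By hypothesis $\rank_{p_i} A = n-1$, so $\ker_{\F_{p_i}} A$ is one-dimensional; therefore any non-trivial kernel vector $v_i$ of $A$ over $\F_{p_i}$ is a nonzero scalar multiple of $w \bmod p_i$, and in particular $(v_i)_g \neq 0$ if and only if $p_i \nmid w_g$.

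Finally I would assemble the equivalence. Fix a coordinate $g$. The condition that some $i$ has $(v_i)_g \neq 0$ fails precisely when $p_i \mid w_g$ for all $i$, that is (the $p_i$ being distinct primes) when $p_1 p_2 \cdots p_k$ divides $w_g$; since $|w_g| \le \Delta_n < p_1 p_2 \cdots p_k$, this is equivalent to $w_g = 0$. Quantifying over $g$, the right-hand side of the theorem holds if and only if $w_g \neq 0$ for all $g$, which by the first step is exactly the assertion that $\Gamma$ is a nut graph. The only point that needs care is the passage from ``$v_i$ is some kernel vector modulo $p_i$'' to ``$v_i$ is proportional to $w \bmod p_i$'': this is where both hypotheses enter, namely $\rank_{p_i} A = n-1$ to make the mod-$p_i$ kernel one-dimensional, and the primitivity of $w$ to guarantee that its reduction does not vanish. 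The remainder is routine.
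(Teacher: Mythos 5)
Your proof is correct, and it takes a noticeably different route from the paper's. The paper's argument is a two-line reduction to Theorem~\ref{theo-lift-adj}: since $\rank_{p_i} A = n-1$, one has $\adj_{p_i} A = \lambda_i v_i^T v_i$ with $\lambda_i \ne 0$ (as in the proof of Lemma~\ref{lemma-adj}), so the vanishing pattern of $v_i$ controls that of $\adj_{p_i} A$, and the lifting to $\R$ is then done entry-by-entry on the adjugate, whose entries are cofactors of a $0$-$1$ matrix and hence bounded by $\Delta_{n-1}$. You instead bypass the adjugate formulation entirely: you extract a single primitive integer kernel vector $w$ from a column of $\adj A$, bound its entries by $\Delta_{n-1} \le \Delta_n$ because they divide cofactors, identify each $v_i$ with a nonzero multiple of $w \bmod p_i$ (this is exactly where both hypotheses enter --- one-dimensionality of the mod-$p_i$ kernel and primitivity of $w$, as you correctly flag), and then run the divisibility/Hadamard-bound argument coordinate-wise on $w$. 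Both proofs rest on the same mechanism (an integer of absolute value at most $\Delta_n$ that is divisible by $p_1p_2\cdots p_k > \Delta_n$ must vanish), but yours is self-contained where the paper's leans on Theorem~\ref{theo-lift-adj}, and it makes explicit a point the paper leaves implicit, namely that the various mod-$p_i$ kernel vectors are all reductions of one and the same bounded integer vector. Every step you give checks out.
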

\begin{proof}
For a matrix $A$ of $p_i$-rank $n-1$ and non-trivial kernel vector $v_i$ satisfies $\adj_{p_i} A = \lambda_i v_i^T v_i$ for some scalar $\lambda_i\in\F_{p_i}$, $\lambda_i\ne 0$. (Cf.\ proof of Lemma \ref{lemma-adj}.) The theorem now follows from Theorem \ref{theo-lift-adj}.
\end{proof}

Note that this theorem requires the primes $p_1, p_2, \ldots$ to be chosen such that $\rank_{p_i} A = n-1$. Fortunately, such primes can always be found (and in fact, most primes will satisfy this property). Indeed, as $\rank A = n-1$, only primes are forbidden for which $\adj_{p_i} A = 0$ (while $\adj A \ne 0$). These are the primes that divide all elements of $\adj A$ at the same time. There is necessarily only a finite number of these, and 
this number is typically zero.

The algorithm to check whether a given graph $\Gamma$ of order $n$ with adjacency matrix~$A$ hence runs as follows.
Perform the following for a (fixed) sequence of distinct primes $p_1,p_2,\ldots$ with $p_i\lessapprox 2^{31}$.
\begin{enumerate}
\item Compute $\rank_{p_i} A$ using the standard Gauss-Jordan algorithm over $\F_{p_i}$.
\item If $\rank_{p_i} A = n$, stop the algorithm. $\Gamma$ is not a nut.
\item If $\rank_{p_i} A < n-1$, discard $p_i$ and turn to the next prime in the list.
\item Otherwise, compute a non-trivial kernel vector $v_i$ for $A$ over $\F_{p_i}$. (Such a vector can easily be obtained from the reduced matrix resulting from step 1 above.)
\item Proceed to the next prime.
\end{enumerate}
These steps should be repeated until one of the following occurs:
\begin{enumerate}
\renewcommand{\theenumi}{\Alph{enumi}}
\item The product of the discarded primes exceeds $\Delta_n$. In this case $\rank A < n-1$ and $\Gamma$ is not a nut.
\item The product of the non-discarded primes exceeds $\Delta_n$.
\end{enumerate} 
(The product of the primes need not be computed exactly. We may use an upper estimate for $\log_2 \Delta_n$ and divide this by 31 to obtain the required number of tries).

In case A, all primes that have been tried will also have been discarded. In case B, we still need to investigate the kernel vectors $v_i$ that were produced on the way. The graph $\Gamma$ will then be a nut if and only if there is no coordinate position where every vector $v_i$ is zero.

\section{Testing and results}

\subsection{The numbers of nut graphs}

\label{subsect:nut_counts}

We implemented our generation algorithm for nut graphs described in Section~\ref{section:generation_nuts} in the programming language C and incorporated it in the program \textit{geng}~\cite{nauty-website, mckay_14} which takes care of the isomorphism rejection. Our implementation of this algorithm is called \textit{Nutgen}, and can be downloaded from~\cite{nutgen-site}. 

In~\cite{fowler2014omni} all nut graphs up to 10 vertices were determined. Using \textit{Nutgen} we generated all non-isomorphic nut graphs up to 13 vertices and also went several steps further for nut graphs with a given lower bound on the girth. (The \textit{girth} is the length of the smallest cycle of a graph).
Table~\ref{table:number_of_nuts} shows the counts of the complete lists of nut graphs generated by our program. 
Figure~\ref{fig:smallest_nuts_girth} shows drawings of the smallest nut graphs with respect to their girth.

\begin{table}
\centering
\footnotesize
\setlength{\tabcolsep}{4pt}
	\begin{tabular}{|c || c | c | c | c | c | c | c | c |}
		\hline
		Order & Nut graphs & $g=3$ & $g = 4$ & $g = 5$ & $g = 6$ & $g = 7$ & $g = 8$ & $g \geq 9$\\
		\hline
$0-6$  & 0 &  0  &  0  &  0  &  0  &  0 &  0  &  0 \\
7  &  3  & 3 &  0  &  0  &  0  &  0 &  0  &  0\\
8  &  13 & 13  &  0  &  0  &  0  &  0 &  0  &  0\\
9  &  560 & 560  &  0  &  0  &  0  &  0 &  0  &  0\\
10  &  12 551 & 12 551  &  0  &  0  &  0  &  0 &  0  &  0\\
11  &  2 060 490 & 2 060 474  &  14  &  2  &  0  &  0 &  0  &  0\\
12  &  208 147 869 & 208 147 847 &  20  &  2  &  0  &  0 &  0  &  0\\
13  &  96 477 266 994  & 96 477 263 085 &  3 889  &  20  &  0  &  0 &  0  &  0\\
14  &  ?  & ? &  18 994   &  35  &  0  &  0 &  0  &  0\\
15  &  ? & ?  &  3 640 637  &  1 021  &  5  &  1 &  0  &  0\\
16  &  ? & ?  &  48 037 856  &  2 410  &  5  &  0 &  0  &  0\\
17  &  ? & ?  &  10 722 380 269  &  88 818  &  154  &  1 &  0  &  0\\
18  &  ? & ?  &  ?  &  341 360  &  139  &  0 &  0  &  0\\
19  &  ? & ?  &  ?  &  14 155 634  &  6 109  &  36 &  0  &  1\\
20  &  ? & ?  &  ?  &  82 013 360  &  6 660  &  8 &  0  &  0\\
		\hline
	\end{tabular}
\caption{The numbers of nut graphs. Columns with a header of the form $g = k$ 
list the numbers of nut graphs with girth~$k$ at each order.}
\label{table:number_of_nuts}
\end{table}

\begin{figure}[h!t]
    \centering
	\includegraphics[width=0.9\textwidth]{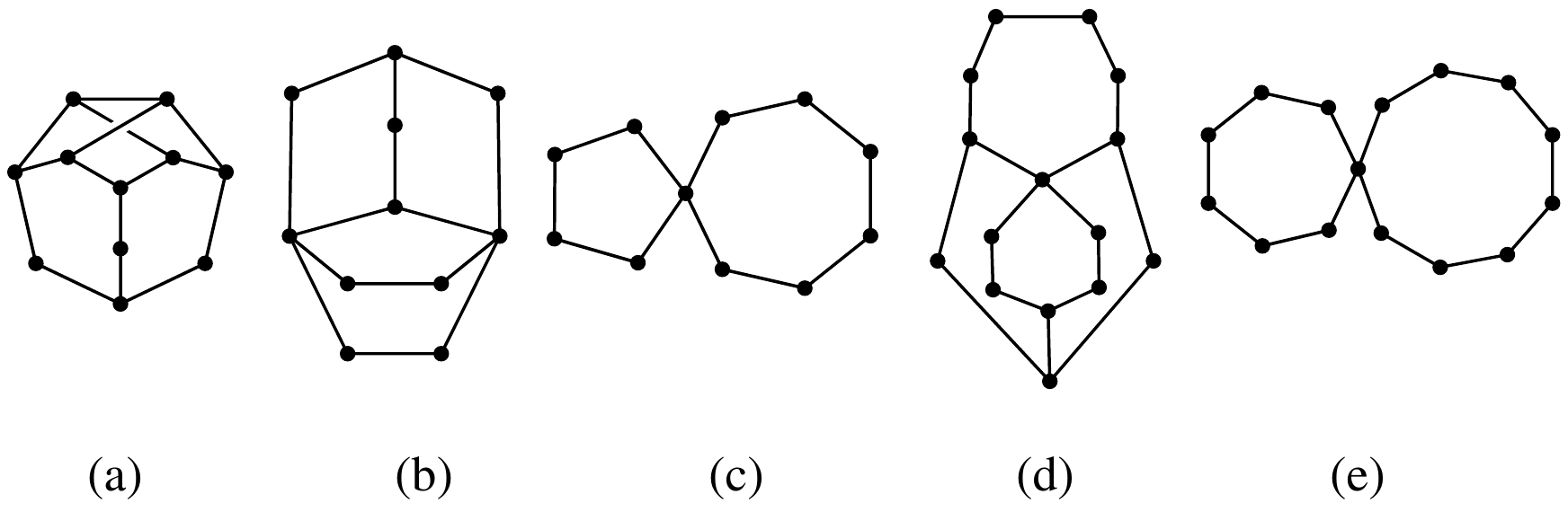}
    \caption{Small nut graphs: (a) one of the 14 smallest nut graphs with girth 4 (with 11 vertices); (b) and (c) the two smallest nut graphs with girth 5 (with 11 vertices);
(d) shows one of the 6 smallest nut graphs with girth 6 (with 15 vertices); (e) the smallest nut graph with girth 7 (with 15 vertices).}
    \label{fig:smallest_nuts_girth}
\end{figure}

In~\cite{fowler2014omni} all chemical nut graphs up to 16 vertices were determined.
Table~\ref{table:number_of_chemical_nuts} shows the counts of the complete lists of chemical nut graphs generated by our program and Figure~\ref{fig:smallest_chemical_nuts_girth} shows drawings of the smallest chemical nut graphs with respect to their girth.

The chemical relevance of girth is that rings of carbon atoms in networks are constrained by steric factors. 
The ideal bond angle for unsaturated ($sp^2$ hybridised) carbon atoms is $120 \degree$. Departures from a ring size of six are 
typically
punished with energy penalties that are especially severe for rings of size 3 and 4.
There are standard methods for comparing steric strain at an atom (vertex of the molecular graph)
(e.g.~\cite{haddon2001comment}), 
which can be related to mathematical notions of combinatorial curvature for polyhedra~\cite{fowler2015distributed}.

\begin{table}
\centering
\small

	\begin{tabular}{|c || c | c | c | c | c | c | c | c |}
		\hline
		Order & Nut graphs & $g=3$ & $g = 4$ & $g = 5$ & $g = 6$ & $g = 7$ & $g = 8$ & $g \geq 9$\\
		\hline
$0-8$  &  0  &  0  &  0  &  0  &  0 & 0  & 0 & 0 \\
9  &  1  &  1  &  0  &  0  &  0  & 0  & 0 & 0\\
10  &  0  &  0  &  0  &  0  &  0  & 0  & 0 & 0\\
11  &  8  &  7  &  1  &  0  &  0  & 0  & 0 & 0 \\
12  &  9  &  7  &  1  & 1  &  0  & 0  & 0 & 0\\
13  &  27  &  23  &  2  &  2  &  0  & 0  & 0 & 0\\
14  &  23  &  22  &  0  &  1  &  0  & 0  & 0 & 0\\
15  &  414  &  338  &  51  & 25  &  0  & 0  & 0 & 0\\
16  &  389  &  339  &  36  &  13  &  1  & 0  & 0 & 0\\
17  &  7 941  &  6 153  &  1 364  &  408  &  15  & 1  & 0 & 0\\
18  &  8 009  &  6 742  &  1 079  &  182  &  6  & 0  & 0 & 0\\
19 & 67 970 & 52 719 & 9 668 & 5 275 & 298 & 10 & 0 & 0\\
20 & 51 837 & 45 261 & 3 812 & 2 628 & 135 & 1 & 0 & 0\\
21 & 1 326 529 & 995 228 & 214 777 & 109 999 & 6 435 & 84 & 5 & 1\\
22 & 1 372 438 & 1 141 082 & 157 415 & 70 977 & 2 937 & 27 & 0 & 0\\
		\hline
	\end{tabular}
\caption{The numbers of chemical nut graphs. 
Columns with a header of the form $g = k$ 
list the numbers of chemical nut graphs with girth~$k$ at each order.}
\label{table:number_of_chemical_nuts}
\end{table}
\begin{figure}[h!t]
    \centering
	\includegraphics[width=0.9\textwidth]{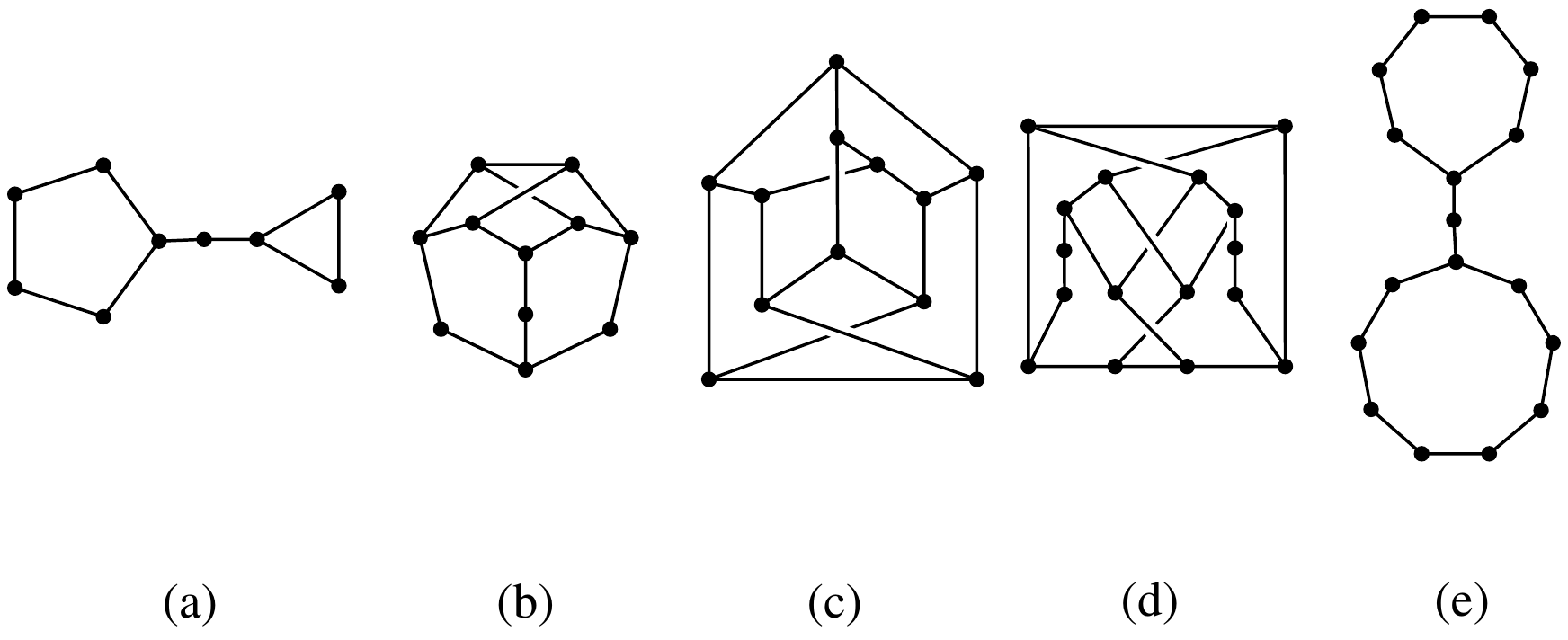}
    \caption{Figure~(a)-(e) show the smallest chemical nut graphs of girth 3, 4, 5, 6 and 7, respectively. They have 9, 11, 12, 16 and 17 vertices, respectively.}
    \label{fig:smallest_chemical_nuts_girth}
\end{figure}

In~\cite{sciriha2007nonbonding} Sciriha and Fowler determined all cubic polyhedral nuts (i.e.\ cubic planar 3-connected graphs that are also nut
graphs) up to 24 vertices. Using the program \textit{plantri}~\cite{brinkmann_07} and our program to test if a graph is a nut graph as a filter, we determined all cubic polyhedral nuts up to 34 vertices. The counts of these graphs can be found in Table~\ref{table:number_of_cubic_polyhedra_nuts}.
Perhaps the most interesting feature of that table is the emergence at $n = 26$ of examples where the number of vertices is not divisible by $6$ (see discussion in~\cite{sciriha2007nonbonding}).

\begin{table}
\centering
\small
	\begin{tabular}{| c | r | r |}
		\hline
		Order & cubic polyhedra & nut graphs\\
		\hline
4  &  1  &  0  \\
6  &  1  &  0  \\
8  &  2  &  0  \\
10  &  5  &  0  \\
12  &  14  &  2  \\
14  &  50  &  0  \\
16  &  233  &  0  \\
18  &  1 249  &  285  \\
20  &  7 595  &  0  \\
22  &  49 566  &  0  \\
24  &  339 722  &  62 043  \\
26  &  2 406 841  &  4  \\
28  &  17 490 241  &  316  \\
30  &  129 664 753  &  16 892 864  \\
32  &  977 526 957  & 3 676  \\
34  &  7 475 907 149  &  447 790  \\
%36  &  57 896 349 553  &  TODO  \\
%38  &  453 382 272 049  &  TODO  \\
%40  &  3 585 853 662 949  &  TODO  \\
		\hline
	\end{tabular}
\caption{The numbers of cubic polyhedral nut graphs.}
\label{table:number_of_cubic_polyhedra_nuts}
\end{table}

In~\cite{sciriha2007nonbonding} Sciriha and Fowler also determined all nut fullerenes up to 120 vertices and showed that there are no {\it IPR} nut fullerenes up to at least 150 vertices.

Using the program \textit{buckygen}~\cite{BGM12, GM15}, we determined all nut fullerenes up to 250 vertices, and showed that there are no nut IPR fullerenes up to at least 320 vertices. The numbers of nut fullerenes up to 250 vertices can be found in Table~\ref{table:number_of_fullerene_nuts}. Only 173 out of the 21 627 759 707 fullerene isomers up to 250 vertices are nuts.

\begin{table}
\centering
\small

	\begin{tabular}{|c | c |}
		\hline
		Order & Nut fullerenes\\
		\hline
36  &  1 \\
42  &  1 \\
44  &  1 \\
48  &  2 \\
52  &  2 \\
60  &  6 \\
72  &  2 \\
82  &  1 \\
\hline
\end{tabular}\quad
	\begin{tabular}{|c | c |}
		\hline
		Order & Nut fullerenes\\
		\hline
84  &  8 \\
96  &  5 \\
108  &  7 \\
120  &  5 \\
132  &  14 \\
144  &  6 \\
156  &  11 \\
160  &  1 \\
\hline
\end{tabular}\quad
	\begin{tabular}{|c | c |}
		\hline
		Order & Nut fullerenes\\
		\hline
168  &  11 \\
180  &  16 \\
192  &  8 \\
204  &  19 \\
216  &  9 \\
228  &  21 \\
240  &  16 \\
& \\
		\hline
	\end{tabular}

\caption{The numbers of nut fullerenes up to 250 vertices. 
For orders up to 250 where no count is listed,
the implication is that there is no nut fullerene of that order.}
\label{table:number_of_fullerene_nuts}

\end{table}

The nut graphs from Tables~\ref{table:number_of_nuts}-\ref{table:number_of_fullerene_nuts} can be
downloaded from the \textit{House of Graphs}~\cite{hog} at
\url{https://hog.grinvin.org/Nuts}

As a partial check on the correctness of our implementations and results, we compared our lists of nut graphs to the known lists of nut graphs up to 10 vertices and of chemical nut graphs up to 16 vertices, which were determined in~\cite{fowler2014omni}. Furthermore, we also compared our results on cubic polyhedral nuts and fullerene nuts with the results from~\cite{sciriha2007nonbonding}.
In each case all results were in complete agreement.

\subsection{Chemical properties of nut graphs}

\label{subsect:nut_chem_props}

In this section we describe the result of our computations of chemically relevant properties on the complete lists of nut graphs determined in Section~\ref{subsect:nut_counts}.

\subsubsection{Nut graphs and non-bonding orbitals}

The chemical significance of nuts comes from the association of the zero eigenvalue of the adjacency matrix with
\textit{non-bonding} orbitals (NBO) in molecules and with the \textit{Fermi level} in materials. 
A non-bonding orbital is balanced between stabilisation (bonding) and destabilisation (anti-bonding) of a molecule by the presence of an electron.
The Fermi level in a conductor corresponds to the energy
that separates occupied from empty bands of energy levels
at absolute zero.
The association between adjacency eigenvalues and eigenvectors and chemical concepts is straightforward: 
eigenvalues correspond to orbital energies, and eigenvectors to molecular orbitals. Of particular importance are 
the HOMO and LUMO ({\it highest occupied} and {\it lowest occupied} molecular orbitals, respectively).
In H{\"u}ckel theory of an all-carbon material, the Fermi level corresponds to the mean of HOMO and LUMO energies.

A neutral carbon network of $n$ centres has $n$ electrons distributed over the centres in delocalised
orbitals according to three rules: the Aufbau and Pauli principles and Hund's rule of maximum multiplicity. In short, electrons are assigned to eigenvectors in decreasing order of eigenvalue (Aufbau), with at most two electrons per eigenvector (Pauli) and, whenever a degeneracy (multiplicity) is encountered, electrons are spread out across eigenvectors/orbitals as far as possible, and with parallel spins as far as possible
(Hund's rule).
Each electron carries an up or down spin, with component along a fixed axis
$\pm {\frac{1}{2}}\hbar$,
where $\hbar$ is Planck's constant divided by $2\pi$; spin-up and spin-down possibilities are known as $\alpha$ and $\beta$, respectively. 
The \textit{occupation number} of a given orbital/eigenvector is therefore 2, 1 or 0, accordingly as it contains spin-paired electrons, a single electron or no electrons. 

The physical significance of occupation of an orbital is that the \textit{charge density} at each site is found in H\"uckel theory by summing the squares of eigenvector entries over each eigenspace, weighting the sum by the average occupation number of the space. Electrons with $\alpha$ and $\beta$ spin contribute equally to charge density. 
Likewise, the \textit{spin density} is determined by a
calculation involving squared eigenvector entries, but with $\alpha$ and $\beta$ spins contributing with opposite sign.
Thus, spin density is calculated from the squared entries in the eigenvectors, summed over any partially occupied eigenspace, weighted by the fraction 
$n_\alpha - n_\beta / (n_\alpha+n_\beta)$, where $n_{\alpha/\beta}$ is the number of electrons 
of $\alpha/\beta$ spin in the eigenspace.

Spin density has a particular chemical significance in that it indicates distribution of radical character. A radical is a molecule with unpaired electron spin distributed over its molecular framework. This has implications for reactivity and for physical measurements such as \textit{esr} (electron spin resonance) coupling constants~\cite{symons1978chemical}.
Radicals are typically reactive, and in the simplest picture, the most reactive sites within the radical will be those of highest spin density.

A nut graph has non-zero entries in the non-trivial nullspace vector on all vertices. It therefore corresponds in single occupation of the corresponding orbital to a distribution of spin density across the whole framework of unsaturated carbon atoms. Most radicals have a mixture of zero and non-zero spin densities across the framework. Nut graphs in this sense are the models for extreme delocalisation of spin density. As we will see below, chemical nut graphs in an electron configuration where the NBO is the singly occupied HOMO have at best a ratio of only 4 between highest and lowest spin densities. In a chemical non-nut graph these NBO spin densities are zero for some vertex or vertices.

The two parameters of chemical significance when considering a nut graph are 
therefore
the \textit{spectral position} and the \textit{dispersion} of entries of
the non-trivial nullspace vector of the nut graph.
The position of the zero eigenvalue in the spectrum of the adjacency matrix is important, because occupation of the NBO will have most effect on the properties of the $\pi$-system if the NBO is either the HOMO or the LUMO, indicating interesting spin-distributions in systems with total charges near to neutrality. A useful indicator for a nut graph is therefore $\delta q$, defined as the charge required for half occupation of the zero eigenvalue vector. If the nut graph has $n_+$ strictly positive and $n_-$ strictly negative eigenvalues, simple counting gives
$$\delta q = n_+ - n_- = n - 2n_+ -1 = 2n_- - n +1 $$
for the charge in units of $e$, the proton charge.
For example, the $9$-vertex chemical nut graph with spectrum 
$\theta_+,2,1,\phi\sp{-1},0,-1,\theta_-,-\phi,-2$,
where $\theta_\pm = {1\pm\sqrt{13}}/2$, carries $9$ electrons
and hence is neutral at half-filling of the NBO.

The defining characteristic of nut graphs is that all vertices carry a non-zero entry in the unique nullspace eigenvector, and hence spin density is distributed across the whole framework. All chemical nut graphs will have non-zero spin density at all sites, if the NBO is half occupied. A simple indicator of the dispersion of the spin-density distribution is the ratio $r$ of magnitudes of largest and smallest entries in the nullspace eigenvector. The function $r^2$ gives the expected ratio of spin densities
at most- and least-spin-rich sites in the molecule. The following is straightforward to prove.

\begin{theorem}\label{thm:r_at_least_2}
A chemical nut graph has $r^2 \geq 4$.
\end{theorem}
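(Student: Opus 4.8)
The plan is to find a single vertex of degree exactly $3$ and read off the kernel-vector relation there. First I would record three structural facts about a chemical nut graph $\Gamma$: it is connected, it has maximum degree at most $3$, and, being a nut graph, it has no leaves, so its minimum degree is at least $2$. Hence every vertex of $\Gamma$ has degree $2$ or $3$. If every vertex had degree $2$, then $\Gamma$, being connected, would be a cycle $C_m$; but no cycle is a nut graph (a cycle on an even number of vertices is bipartite, and in any case the nullity of $C_m$ is never exactly $1$). Therefore $\Gamma$ contains a vertex $v$ of degree exactly $3$; denote its neighbours by $a$, $b$, $c$.

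Next I would invoke the defining kernel equation. Let $x$ be a non-trivial kernel vector of the adjacency matrix $A$ of $\Gamma$; since $\Gamma$ is a nut graph, no entry of $x$ vanishes. The row of $Ax = 0$ indexed by $v$ reads $x_a + x_b + x_c = 0$, a relation among three nonzero real numbers.

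The elementary heart of the argument is then the observation that three nonzero reals summing to zero cannot all have the same sign, so up to relabelling and an overall change of sign we may assume $x_a > 0$, $x_b > 0$ and $x_c < 0$ with $|x_c| = x_a + x_b$. Consequently $|x_c| \ge 2\min(x_a, x_b)$, i.e.\ among $|x_a|, |x_b|, |x_c|$ the largest is at least twice the smallest. Writing $M = \max_u |x_u|$ and $m = \min_u |x_u|$, where $u$ ranges over all vertices of $\Gamma$, we get $M \ge 2m$, that is $r = M/m \ge 2$, and hence $r^2 \ge 4$.

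I do not expect a genuine obstacle here; the only step that needs a little care is the claim that $\Gamma$ is not $2$-regular, i.e.\ that one cannot avoid a vertex of degree $3$. This is the place where the specific fact that cycles are not nut graphs enters (equivalently, one can kill the even cycles using that nut graphs are non-bipartite and the odd cycles by noting they are nonsingular). It is also worth remarking that the bound is best possible: the $9$-vertex chemical nut graph exhibited earlier already attains $r = 2$.
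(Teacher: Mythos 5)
Your proposal is correct and follows essentially the same route as the paper: locate a vertex of degree $3$ (using minimum degree $\ge 2$, maximum degree $\le 3$, and the fact that no cycle is a nut graph) and exploit the kernel relation $x_a + x_b + x_c = 0$ among three nonzero entries to force a ratio of at least $2$. Your one-line inequality $|x_c| = x_a + x_b \ge 2\min(x_a,x_b)$ simply replaces the paper's two-case normalisation $a=1$, $b=x$, $c=-1-x$; the content is identical.
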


\begin{proof}
A chemical graph has maximum degree at most $3$, but a nut graph has minimum degree at least $2$ and is not a cycle, so every chemical nut graph has a
vertex, say $v$, of degree $3$. Call the kernel-vector entries on the neighbours of $v$ 
$\{a, b, c\}$; choose a normalisation such that $a = 1$, $b=x$, $c = -1-x$ with $x > 0$. 
Then, either $x \geq 1$     and $| c/a | = |1+x| \geq 2$, 
      or     $0 < x < 1$ and $| c/b | = |(1+x)/x| >2$. 
Hence, $r\sp2 \geq 4$ for any chemical nut graph.
\end{proof}

A further aspect of nut graphs for a specific subset of chemical graphs is the classification of regular cubic nut
graphs. This is based on the distribution of entries in the unique nullspace eigenvector on the three neighbours of each vertex.  A uniform nut graph has entries $a\{2, -1, -1\}$ around every vertex (where $a$ is a single scaling factor). A balanced graph has entries in ratio $\{2, -1, -1\}$, but with different scaling factors. All other cubic nut graphs are `just nuts'.
Dispersion is at a minimum for uniform nuts
($r^2 = 4$);
for a balanced nut  $r^2$ is a power of $2$;
for a simple nut (a `just nut') it can be large.
Increase of $r$ corresponds to a reduction of the delocalisation of spin and charge densities for the orbital.
  
Nut graphs also figure in a different application of graph theory in chemistry: the source-and-sink-potential (SSP) model~\cite{goyer2007source,pickup2008analytical} of ballistic molecular condition. 
It has recently emerged that in this model
the transmission of an electron through a $\pi$ framework at the Fermi level is determined by nullities of four graphs~\cite{fowler2009selection, fowler2009conduction}: $G, G-\bar L, G-\bar R$ and $G-\bar L-\bar R$. Here $G$ is the molecular graph and $\bar L$ and $\bar R$ are the vertices of $G$ that are next to the leads in the molecular circuit. 

In this model, nut graphs are \textit{strong omniconductors}, that is to say they conduct at the Fermi level irrespective of the choice of vertices $\bar L \neq \bar R$ or $\bar L = \bar R$~\cite{fowler2013omni}.  These have a special significance in that connection of a strong omniconductor 
molecule to two leads in any manner whatsoever leads to a conducting device, in the simple
(empty-molecule) SSP model.
It has been proved in~\cite{fowler2014omni} that nut graphs are exactly the strong omniconductors of nullity one.

\subsubsection{Position in spectrum of the zero eigenvalue}

Tables~\ref{table:NBO_general_nuts}-\ref{table:NBO_cubic_polyhedra} show frequency tables of the position of the zero eigenvalue within the spectrum of the general nut graphs, chemical nut graphs and cubic polyhedral nut graphs, respectively.

To determine the data on the NBO in Tables~\ref{table:NBO_general_nuts}-\ref{table:NBO_cubic_polyhedra}, the eigenvalues $\lambda_1,...,\lambda_n$ 
were sorted in descending order (i.e.\ $\lambda_1$ is the largest eigenvalue and $\lambda_n$ the largest).
The tables report numbers of cases where the zero eigenvalue is at position
${\left\lceil\frac n2\right\rceil} +k$ for the nut graphs of order $n$. For the sets of graphs and ranges of $n$ considered here, $k$ falls between
$-3$ and $+3$ ($k = +3$ for some fullerenes).

The chemical implication is that there are typically
many molecules based on
chemical nut graphs where a radical with fully
delocalised spin density and small net charge
would be produced by half-occupation of the kernel eigenvector:
for odd $n$, these have the NBO at position $\left\lceil\frac n2\right\rceil$ 
and correspond to the neutral molecule;
for even $n$, these have the NBO at position 
$\left\lceil\frac n2\right\rceil$ and charge $+1$ or at    
$\left\lceil\frac n2\right\rceil+1$ and charge $-1$.

However, the tests also showed that for all nut fullerenes up to 250 vertices 
the NBO is at position $k=3$, except for one fullerene on 42 vertices and 
one of 60 vertices where the NBO is at position $k=2$, so the charges at which 
nut fullerene radicals 
could display this delocalisation are further from neutrality.
Amongst chemical graphs, fullerenes are atypical in that they tend to have $n_+ > n_-$, and hence occupation of the zero eigenvalue
 often corresponds to a significant negative molecular charge.

\newcommand*{\nhalf}{\displaystyle{\left\lceil\frac n2\right\rceil}}
\begin{table}
\centering
\small
\footnotesize
	\begin{tabular}{|c || c | c | c | c | c | c || c |}
		\hline
Order \rule[-2ex]{0pt}{5.5ex}& $\nhalf \!-\! 3$ & $\nhalf\!-\!2$ &$\nhalf\!-\!1$ & $\nhalf$ & $\nhalf\!+\! 1 $ & $\nhalf\!+\!2$ & Total\\
		\hline
7 &  &  &  & 3 &  &  & 3 \\
8 &  &  &  & 13 &  &  & 13 \\
9 &  & 1 & 65 & 494 &  &  & 560 \\
10 &  & 4 & 295 & 12 169 & 83 &  & 12 551 \\
11 & 14 & 2 597 & 316 473 & 1 741 400 & 6 &  & 2 060 490 \\
12 & 55 & 29 313 & 8 879 721 & 196 259 526 & 2 979 253 & 1 & 208 147 869 \\
		\hline
	\end{tabular}

\caption{Frequency table of the position of the zero eigenvalue
within the spectrum of the nut graphs. 
The columns with a header of the form $\lceil n/2 \rceil + k$ contain the numbers of nut graphs of order $n$ where the NBO is at position $\lceil n/2 \rceil + k$.}
\label{table:NBO_general_nuts}

\end{table}

\begin{table}
\centering
\small

	\begin{tabular}{|c || c | c | c | c | c || c |}
		\hline
Order\rule[-2ex]{0pt}{5.5ex} & $\nhalf-2$ &$\nhalf- 1$ & $\nhalf$ & $\nhalf + 1 $ & $\nhalf + 2$ & Total\\
		\hline
9 &  &  & 1 &  &  & 1 \\
10 &  &  &  &  &  & 0 \\
11 &  &  & 8 &  &  & 8 \\
12 &  &  & 6 & 3 &  & 9 \\
13 &  &  & 27 &  &  & 27 \\
14 &  & 1 & 21 & 1 &  & 23 \\
15 &  & 5 & 409 &  &  & 414 \\
16 &  & 5 & 311 & 73 &  & 389 \\
17 & 3 & 173 & 7 754 & 11 &  & 7 941 \\
18 & 1 & 112 & 5 769 & 2 121 & 6 & 8 009 \\
19 & 22 & 1 140 & 66 766 & 42 &  & 67 970 \\
20 & 9 & 761 & 42 203 & 8 859 & 5 & 51 837 \\
21 & 194 & 18 986 & 1 306 168 & 1 181 &  & 1 326 529 \\
22 & 107 & 13 788 & 1 024 175 & 334 132 & 236 & 1 372 438 \\
		\hline
	\end{tabular}

\caption{Frequency table of the position of the zero eigenvalue 
within the spectrum of the chemical nut graphs. 
Column headings as in Table {\ref{table:NBO_general_nuts}}.}
\label{table:NBO_chemical_nuts}

\end{table}

\begin{table}
\centering
\small

	\begin{tabular}{|c || c | c | c | c | c || c |}
		\hline
Order\rule[-2ex]{0pt}{5.5ex} & $\nhalf-2$ &$\nhalf- 1$ & $\nhalf$ & $\nhalf + 1 $ & $\nhalf + 2$ & Total\\
		\hline
12 &  &  & 2 &  &  & 2 \\
18 &  & 7 & 262 & 16 &  & 285 \\
24 & 4 & 3 022 & 54 699 & 4 317 & 1 & 62 043 \\
26 &  &  & 1 & 2 & 1 & 4 \\
28 &  & 128 & 187 & 1 &  & 316 \\
30 & 18 486 & 1 363 546 & 14 169 947 & 1 339 896 & 989 & 16 892 864 \\
32 & 78 & 442 & 860 & 2 150 & 146 & 3 676 \\
34 & 108 & 197 257 & 249 825 & 600 &  & 447 790 \\
		\hline
	\end{tabular}

\caption{Frequency table of the position of the 
zero eigenvalue within the spectrum of the cubic polyhedral nut graphs. 
Column headings as in Table {\ref{table:NBO_general_nuts}}.}
\label{table:NBO_cubic_polyhedra}

\end{table}

\subsubsection{Ratio of the largest to smallest kernel eigenvector entry}

In this section we will tabulate and discuss the ratio of the largest to smallest entry in the eigenvector that 
corresponds with the zero eigenvalue for nut graphs (note: here we use the absolute values of the entries).

Since there are too many values to list the counts for each ratio and order, we only list the smallest ratio, largest ratio and the number of graphs with the smallest and largest ratio for each order. These results can be found in Tables~\ref{table:ratio_general}-\ref{table:ratio_cubic_polyhedra} for nut graphs, chemical nut graphs and cubic polyhedral nut graphs, respectively. We also found several nut graphs, chemical nut graphs and cubic polyhedral nut graphs for which the ratio is not an integer.

\begin{table}
\centering
\small
	\begin{tabular}{|c || c | c | c | c |}
		\hline
		\multirow{2}{*}{Order} & \multirow{2}{*}{min $r$} & frequency & \multirow{2}{*}{max $r$} & frequency\\	
		 &  & min $r$ &  & max $r$\\	
		\hline
7 & 1 & 3 & 1 & 3  \\
8 & 1 & 7 & 2 & 6  \\
9 & 1 & 83 & 4 & 4  \\
10 & 1 & 988 & 6 & 1  \\
11 & 1 & 34 910 & 12 & 9  \\
12 & 1 & 1 739 859 & 16 & 13  \\
		\hline
	\end{tabular}

\caption{Counts of the smallest ratio, largest ratio and the number of graphs with the smallest and largest ratio for nut graphs. (Here $r$ stands for the magnitude
of the ratio of largest to smallest entry in the eigenvector that corresponds with the zero eigenvalue).}
\label{table:ratio_general}
\end{table}

\begin{table}
\centering
\small
	\begin{tabular}{|c || c | c | c | c |}
		\hline
		\multirow{2}{*}{Order} & \multirow{2}{*}{min $r$} & frequency & \multirow{2}{*}{max $r$} & frequency\\	
		 &  & min $r$ &  & max $r$\\	
		\hline
9 & 2 & 1 & 2 & 1  \\
10 & - & - & - & -  \\
11 & 2 & 6 & 4 & 1  \\
12 & 2 & 9 & 2 & 9  \\
13 & 2 & 7 & 4 & 8  \\
14 & 2 & 9 & 4 & 6  \\
15 & 2 & 80 & 8 & 2  \\
16 & 2 & 195 & 4 & 73  \\
17 & 2 & 1 284 & 10 & 12  \\
18 & 2 & 4 151 & 10 & 1  \\
19 & 2 & 1 822 & 15 & 5  \\
20 & 2 & 3 872 & 13 & 2  \\
21 & 2 & 32 278 & 22 & 7  \\
22 & 2 & 149 748 & 18 & 4  \\
		\hline
	\end{tabular}

\caption{Counts of the smallest ratio, largest ratio and the number of graphs with the smallest and largest ratio for chemical nut graphs. Conventions as in 
Table~{\ref{table:ratio_general}}.}
\label{table:ratio_chemical}
\end{table}

An observation from Table~\ref{table:ratio_chemical} is that for every order $n$ in range for which there is a 
chemical nut graph, the bound $r = 2$ is realised (recall from Theorem~\ref{thm:r_at_least_2} that chemical nut graphs have $r \ge 2$). We now show that this is always the case. In fact, the statistical evidence suggests that $r = 2$ is a common value.  

\begin{theorem}
There is a chemical nut graph with $r = 2$ for every order $n \ge 9$ ($n \ne 10$).
\end{theorem}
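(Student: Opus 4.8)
The plan is to prove the statement by induction in steps of four vertices, using a single edge-expansion operation. The key lemma is: \emph{if $\Gamma$ is a chemical nut graph and $uv$ is any edge of $\Gamma$, and $\Gamma'$ is obtained from $\Gamma$ by deleting $uv$ and inserting a path $u-w_1-w_2-w_3-w_4-v$ through four new vertices (equivalently, by subdividing $uv$ four times), then $\Gamma'$ is again a chemical nut graph and its kernel eigenvector has the same ratio $r$ as that of $\Gamma$.} Given this, the theorem follows by choosing suitable base graphs and applying the lemma repeatedly.

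To establish the lemma I would argue as follows. Let $x$ be the (essentially unique) kernel vector of the adjacency matrix of $\Gamma$, with $x_u, x_v \ne 0$ since $\Gamma$ is a nut graph. First I would exhibit a kernel vector of $\Gamma'$: take $x'$ equal to $x$ on $V(\Gamma)$ and set $x'_{w_1}=x_v$, $x'_{w_2}=-x_u$, $x'_{w_3}=-x_v$, $x'_{w_4}=x_u$. The four equations at $w_1,\dots,w_4$ hold trivially (each is of the form $x_u-x_u=0$ or $x_v-x_v=0$), and the equations at $u$ and at $v$ reduce exactly to the original equations of $\Gamma$ because $x'_{w_1}=x_v$ and $x'_{w_4}=x_u$; all other equations are unchanged. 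Conversely, for any kernel vector $y$ of $\Gamma'$, the equations along the inserted path force $y_{w_2}=-y_u$, $y_{w_4}=y_u$, $y_{w_3}=-y_v$, $y_{w_1}=y_v$, and then the equations at $u$ and $v$ become precisely the defining equations of the kernel of $\Gamma$; hence restriction to $V(\Gamma)$ is a linear isomorphism between the kernels, so the nullity of $\Gamma'$ equals that of $\Gamma$, namely $1$. Since the new entries $\pm x_u,\pm x_v$ are nonzero, $\Gamma'$ is a nut graph; it is connected, $u$ and $v$ keep their degrees and the new vertices have degree $2$, so it is chemical; and the new entries have magnitudes $|x_u|,|x_v|$, which lie between the minimum and maximum magnitudes of $x$, so the global maximum-to-minimum ratio $r$ is unchanged.

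It then remains to supply base cases. By Table~\ref{table:ratio_chemical} there is a chemical nut graph with $r=2$ on $9$, $11$, $12$ and $14$ vertices (and on $13$, though $13=9+4$ makes it redundant). Every integer $n\ge 9$ with $n\ne 10$ can be written as $n_0+4k$ with $k\ge 0$ and $n_0\in\{9,11,12,14\}$, since $10$ is precisely the unique order congruent to $2$ modulo $4$ that lies between $9$ and $14$, and for that order there is indeed no chemical nut graph at all (Table~\ref{table:number_of_chemical_nuts}); applying the lemma $k$ times to any edge of the base graph yields a chemical nut graph on $n$ vertices with $r=2$, which together with Theorem~\ref{thm:r_at_least_2} gives exactly $r=2$. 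The only substantive point is the lemma, and within it the only delicate part is the converse direction — checking that the expansion creates no extra kernel vectors so that the nullity stays $1$ rather than increasing — but this is immediate once one observes that the equations along the inserted path pin down all four new coordinates in terms of $y_u$ and $y_v$; the rest (connectivity, subcubicity, the arithmetic of which orders are reached) is routine bookkeeping.
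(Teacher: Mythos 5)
Your proposal is correct and follows essentially the same route as the paper: the $P_4$ edge-insertion giving kernel entries $a,b,-a,-b,a,b$ along the expanded edge, preservation of nuttiness, subcubicity and $r$, and the base cases $9,11,12,14$ drawn from Table~\ref{table:ratio_chemical}. The only difference is that you spell out the converse direction (that the inserted path pins down the new coordinates, so the nullity cannot increase), which the paper leaves implicit.
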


\begin{proof}
An edge of a nut graph carries entries $a-b$ in the kernel eigenvector, and can be expanded by insertion of 
a $P_4$ unit to give a kernel eigenvector of the $(n+4)$-vertex graph with entries
$a-b-{\overline a}-{\overline b}-a-b$; the expanded graph is still a nut, and as no new entry magnitudes have been
created, $r$ is conserved. 
Hence, to guarantee existence of chemical nut graphs with $r = 2$ for all $n \ge 9$ ($n \ne 10$)
it is sufficient to have one such graph at each of $9, 11, 12, 14$, which is guaranteed by the data in
Table~\ref{table:ratio_chemical}. 
\end{proof}

\begin{table}
\centering
\small
	\begin{tabular}{|c || c | c | c | c |}
		\hline
		\multirow{2}{*}{Order} & \multirow{2}{*}{min $r$} & frequency & \multirow{2}{*}{max $r$} & frequency\\	
		 &  & min $r$ &  & max $r$\\	
		\hline
12 & 2 & 2 & 2 & 2  \\
18 & 2 & 235 & 7 & 2  \\
24 & 2 & 35 632 & 20 & 2  \\
26 & 4 & 2 & 12 & 1  \\
28 & 5 & 7 & 18 & 1  \\
30 & 2 & 6 535 314 & 52 & 1  \\
32 & 4 & 803 & 25 & 1  \\
34 & 4 & 860 & 49 & 2  \\
		\hline
	\end{tabular}

\caption{Counts of the smallest ratio, largest ratio and the number of graphs with the smallest and largest ratio for cubic polyhedral nut graphs. Conventions as in 
Table~{\ref{table:ratio_general}}}.
\label{table:ratio_cubic_polyhedra}
\end{table}

The chemical nut graphs for which the NBO is at position $\lceil n/2 \rceil$ and which have the smallest ratio (i.e.\ 2) are of special chemical interest since these will have the smoothest distribution of spin density in a molecular graph with an electron count close to neutrality where this eigenvector
is half occupied. These counts are listed in Table~\ref{table:NBO_n2_ratio2}.
Conversely, nut graphs with maximum $r$ are the nut graphs that are in a sense 
as close as possible to losing their nut status.

\begin{table}
\centering
\small

\begin{tabular}{|c | c |}
		\hline
		Order & Counts\\
		\hline
9 & 1 \\
10 & - \\
11 & 6 \\
12 & 6 \\
13 & 7 \\
14 & 7 \\
15 & 77 \\
\hline
\end{tabular}\quad
	\begin{tabular}{|c | c |}
		\hline
		Order & Counts\\
		\hline
16 & 142 \\
17 & 1 188 \\
18 & 2 753 \\
19 & 1 656 \\
20 & 2 773 \\
21 & 29 932 \\
22 & 98 087 \\
\hline
\end{tabular}

\caption{The number of chemical nut graphs for which the NBO is at position $\nhalf$  
and the ratio $r$ of the largest to smallest entry in the eigenvector 
which corresponds with the zero eigenvalue is minimum (i.e.\ $r=2$).}
\label{table:NBO_n2_ratio2}

\end{table}

\medskip

\noindent
\textit{Acknowledgements:}
Patrick W.\ Fowler  is supported by the University of Sheffield and the Royal Society/Leverhulme Foundation. Jan Goedgebeur is supported by a Postdoctoral Fellowship of the Research Foundation Flanders (FWO).
Most computations for this work were carried out using the Stevin Supercomputer Infrastructure at Ghent University.

%\bibliographystyle{plain}
%\bibliography{references}

\end{document}